\newsavebox{\measure@tikzpicture}
  \edef\tikzscale{\pgfmathresult}%
\DeclarePairedDelimiter\norm{\lvert}{\rvert}
\DeclarePairedDelimiter\inner{\langle}{\rangle}
\numberwithin{equation}{section}
\newcounter{intro}
		\newtheorem{introthm}[intro]{Theorem}
		\newtheorem{thm}[equation]{Theorem}
		\newtheorem{lem}[equation]{Lemma}
		\newtheorem{cor}[equation]{Corollary}
		\newtheorem{prop}[equation]{Proposition}
\theoremstyle{remark}
		\newtheorem{rem}[equation]{Remark}
\theoremstyle{definition}
		\newtheorem{exam}[equation]{Example}
\title{Vanishing-off subgroups and supercharacter theory products}
\author{Shawn T. Burkett}
\address{Department of Mathematical Sciences, Kent State University, Kent,
Ohio 44240, U.S.A.} \email{sburket1@kent.edu}
\author{Mark L. Lewis}
\address{Department of Mathematical Sciences, Kent State University, Kent,
Ohio 44240, U.S.A.} \email{mllewis@kent.edu}
\date{\today}
\subjclass[2010]{20C15}
\keywords{supercharacter theory;character theory;schur ring}
\begin{document}
\maketitle
\begin{abstract}
In this paper, we study the vanishing-off subgroups of supercharacters, and use these to determine several new characterizations of supercharacter theory products. In particular, we give a character theoretic characterization that allows us to conclude that one may determine if a supercharacter theory is a $\Delta$-product or $\ast$-product from the values of its corresponding supercharacters.
\end{abstract}
\section{Introduction}
Let $G$ be a finite group. A supercharacter theory of $G$, a concept formalized by Diaconis and Isaacs in \cite{ID07}, essentially consists of two entities: a distinguished set of mutually orthogonal characters, called supercharacters, and a partition of $G$ into unions of conjugacy classes, called superclasses. The supercharacters and superclasses of a supercharacter theory exhibit much of the same duality as the irreducible characters and conjugacy classes of the group do, and the usual character theory of $G$ is a trivial example of a supercharacter theory, often denoted by $\mathsf{m}(G)$. Every supercharacter theory $\mathsf{S}$ of $G$ gives rise to a subalgebra $\mathrm{scf}(\mathsf{S})$ of the algebra $\mathrm{cf}(G)$ of complex-valued class functions of $G$, and this subalgebra is the $\mathbb{C}$-linear span of the supercharacters of $\mathsf{S}$. Although supercharacter theory was developed primarily to simplify the complicated character theory of certain families of groups, one may not be able to deduce much group theoretic information from a general supercharacter theory. It is reasonable to expect however that the supercharacters of a supercharacter theory $\mathsf{S}$ of $G$ hold some information about the structure of $\mathrm{scf}(\mathsf{S})$.

As with character theory, one pathway for studying the properties of a supercharacter theory is through normal subgroups and quotients. Any normal subgroup which arises as the intersection of kernels of supercharacters of a supercharacter theory $\mathsf{S}$ is called a supernormal subgroup, or $\mathsf{S}$-normal subgroup. Equivalently these subgroups can be expressed as a union of superclasses, also called $\mathsf{S}$-classes. In his Ph.D. thesis \cite{AH08}, Hendrickon introduced the concept of supernormality and developed a way in which one may construct for any $\mathsf{S}$-normal subgroup $N$ a supercharacter theory $\mathsf{S}_N$ of $G$ and a supercharacter theory $\mathsf{S}^{G/N}$ of $G/N$. The $\mathsf{S}_N$-classes are exactly the $\mathsf{S}$-classes that are contained in $N$, and the $\mathsf{S}^{G/N}$ supercharacters can be identified naturally with the $\mathsf{S}$-supercharacters containing $N$ in their kernel. In particular, these supercharacter theories are built from $\mathsf{S}$ in a fairly intuitive way. Hendrickson also showed that the supercharacter theories $\mathsf{S}_N$ and $\mathsf{S}^{G/N}$ may be used to construct a related supercharacter theory, denoted $\mathsf{S}_N\mathop{\ast}\mathsf{S}^{G/N}$ and called the $\ast$-{\it product} of $\mathsf{S}_N$ and $\mathsf{S}^{G/N}$. The supercharacters of this supercharacter theory come in two flavors --- supercharacters of $\mathsf{S}^{G/N}$ naturally considered characters of $G$, and supercharacters of $\mathsf{S}_N$ induced to $G$. The superclasses of $\mathsf{S}_N\mathop{\ast}\mathsf{S}^{G/N}$ are either $\mathsf{S}_N$-superclasses, or preimages of $\mathsf{S}^{G/N}$-superclasses in $G$. It turns out that every $\mathsf{S}_N\mathop{\ast}\mathsf{S}^{G/N}$-superclass is a union of $\mathsf{S}$-superclasses, and that $\mathsf{S}$ coincides with $\mathsf{S}_N\mathop{\ast}\mathsf{S}^{G/N}$ if and only if every $\mathsf{S}$-class lying outside of $N$ is a union of full $N$-cosets \cite[Corollary 3.10]{AH08}. 

In the event that $\mathsf{S}$ is the usual supercharacter theory $\mathsf{m}(G)$, this relates the $\ast$-product to a group theoretic object called a Camina pair. A pair $(G,N)$ is called a {\it Camina pair} if $N$ is a normal subgroup of $G$ for which every conjugacy class of $G$ lying outside of $N$ is a union of full $N$-cosets. It is also known to be equivalent to the condition that every member of $\mathrm{Irr}(G\mid N)$ vanishes off of $N$, where as usual $\mathrm{Irr}(G\mid N)$ dentoes the set of all irreducible characters of $G$ not containing $N$ in their kernel, This connection of Camina pairs to $\ast$-products was explored by Lewis and Wynn in \cite{camina}, where the authors proved that every supercharacter theory of a Camina pair is a $\ast$-product over some normal subgroup of $G$. A natural generalization of both Camina pairs and $\ast$-products can be found in a similar way. Let $M$ and $N$ be $\mathsf{S}$-normal subgroups. Then $\Delta$-product $\mathsf{S}_N\mathop{\Delta}\mathsf{S}^{G/M}$ is the supercharacter theory of $G$ whose superclasses lying outside of $N$ are unions of $M$-classes. In the event that $\mathsf{S}$ is the supercharacter theory $\mathsf{m}(G)$ and coincides with $\mathsf{S}_N\mathop{\Delta}\mathsf{S}^{G/M}$, the tuple $(G,N,M)$ is called a {\it Camina triple}. That is, $(G,N,M)$ is called a Camina triple if for every $g\in G\setminus N$ and $m\in M$, $gm$ is conjugate to $g$. It is precisely this connection between $\Delta$-products and Camina triples that we exploit in this paper.

Camina triples were first studied by by Mattarei in his dissertation (see \cite{SM92}). In \cite{MLvos09}, Lewis studies various properties of {\it generalized Camina pairs}, Camina triples of the form $(G,N,[G,G])$. Many of the results of that paper were later generalized in \cite{NM14} to arbitrary Camina triples by Mlaiki. One of the main tools used by both Lewis and Mlaiki is the so-called {\it vanishing-off} subgroup. The vanishing off subgroup $\mathbf{V}(\chi)$ of an irreducible character $\chi$ of a finite group $G$ is the subgroup generated by all elements $g$ of $G$ satisfying $\chi(g)\neq0$. Classically, vanishing-off subgroups have been tremendously useful in determining links between the structure of $G$ and arithmetic properties of the degrees of its irreducible characters (see \cite{MI76} for details). To see the connection to Camina triples, we note that whenever $(G,N,M)$ is a Camina triple, every member of $\mathrm{Irr}(N\mid M)$ induces homogeneously to $G$, which means that every member of $\mathrm{Irr}(G\mid M)$ vanishes off of $N$. In particular, if $(G,N,M)$ is a Camina triple, the subgroup
\[\mathbf{V}(G\mid M)=\prod_{\chi\in\mathrm{Irr}(G\mid M)}\mathbf{V}(\chi)\] must contained in $N$. 

Of course, there is nothing about the definition of $\mathbf{V}(\chi)$ that requiries $\chi$ to be irreducible. We thus define for a supercharacter theory $\mathsf{S}$ of $G$ and an $\mathsf{S}$-normal subgroup $N$ of $G$ the subgroup $\mathbf{V}(\mathsf{S}\mid N)$ to be the product of the $\mathbf{V}(\chi)$ where $\chi$ ranges over all supercharacters of $\mathsf{S}$ that do not contain $N$ in their kernel. Our first main result is a generalization of \cite[Theorem 2.1]{NM14}.

\begin{introthm}\label{introdelta}
Let $\mathsf{S}$ be a supercharacter theory of $G$, and let $M\le N$ be $\mathsf{S}$-normal. The following are equivalent:
\begin{enumerate}[label={\bf(\arabic*)}]\openup5pt
\item$\mathsf{S}$ is a $\Delta$-product over $M$ and $N$;
\item for each $g\in G\setminus N$ and for each supercharacter $\chi$ not containing $M$ in its kernel, $\chi(g)=0$;
\item ${\bf V}(\mathsf{S}\mid M)\le N$.
\end{enumerate}
\end{introthm}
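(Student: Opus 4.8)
The plan is to prove the cycle of implications by routing everything through $(2)$: first the easy equivalence $(2)\Leftrightarrow(3)$, then $(1)\Rightarrow(2)$, and finally $(2)\Rightarrow(1)$. The equivalence $(2)\Leftrightarrow(3)$ is essentially a restatement of the definition of the vanishing-off subgroup, using that $N$ is a subgroup. Indeed, if a supercharacter $\chi$ with $M\not\le\ker\chi$ vanishes on $G\setminus N$, then all generators of $\mathbf V(\chi)$ lie in $N$, so $\mathbf V(\chi)\le N$; as this holds for every such $\chi$, the product defining $\mathbf V(\mathsf S\mid M)$ lies in $N$. Conversely, if $\mathbf V(\mathsf S\mid M)\le N$ then $\mathbf V(\chi)\le N$ for every supercharacter $\chi$ with $M\not\le\ker\chi$, and since any $g$ with $\chi(g)\ne0$ lies in $\mathbf V(\chi)\le N$, each such $\chi$ vanishes on $G\setminus N$; this is $(2)$.

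For $(1)\Rightarrow(2)$ I would exploit the mutual orthogonality of distinct supercharacters together with the fact that the inflation of $\mathrm{scf}(\mathsf S^{G/M})$ to $G$ is precisely the set of $f\in\mathrm{scf}(\mathsf S)$ that are constant on cosets of $M$ --- a consequence of Hendrickson's construction, in which the $\mathsf S^{G/M}$-classes are the images of the $\mathsf S$-classes and the $\mathsf S^{G/M}$-supercharacters are the $\mathsf S$-supercharacters containing $M$ in their kernel. Assuming $(1)$, every $\mathsf S$-class outside $N$ is a union of cosets of $M$. Fix a supercharacter $\chi$ with $M\not\le\ker\chi$ and define $f\colon G\to\mathbb C$ by $f(g)=\chi(g)$ when $g\notin N$ and $f(g)=0$ when $g\in N$. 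Then $f\in\mathrm{scf}(\mathsf S)$, since $f$ is $0$ on the $\mathsf S$-classes inside $N$ and agrees with the class function $\chi$ on those outside; moreover $f$ is constant on each coset of $M$, because a coset meeting $N$ lies in $N$ while for $g\notin N$ the coset $gM$ is contained in the $\mathsf S$-class of $g$, on which $\chi$ is constant. Hence $f$ lies in the inflation of $\mathrm{scf}(\mathsf S^{G/M})$, a space spanned by supercharacters containing $M$ in their kernel, and since $\chi$ is a supercharacter not among these, orthogonality gives
\[
0=\langle f,\chi\rangle=\frac{1}{|G|}\sum_{g\in G}f(g)\overline{\chi(g)}=\frac{1}{|G|}\sum_{g\in G\setminus N}|\chi(g)|^2,
\]
which forces $\chi(g)=0$ for all $g\in G\setminus N$; this is $(2)$.

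For $(2)\Rightarrow(1)$ I would first record that, by the construction of the $\Delta$-product, the $\mathsf S_N\mathop{\Delta}\mathsf S^{G/M}$-classes are exactly the $\mathsf S$-classes contained in $N$ together with the sets $\mathcal K M$ as $\mathcal K$ runs over the $\mathsf S$-classes not contained in $N$, so that $\mathsf S$ equals this product precisely when every $\mathsf S$-class outside $N$ is already a union of cosets of $M$ (the analogue for $\Delta$-products of \cite[Corollary 3.10]{AH08}); it therefore suffices to prove the latter from $(2)$. Given $g\in G\setminus N$ and $m\in M$, I claim $g$ and $gm$ lie in the same $\mathsf S$-class, and since the supercharacters separate the $\mathsf S$-classes it is enough to verify $\chi(g)=\chi(gm)$ for every supercharacter $\chi$. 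If $M\le\ker\chi$ this is immediate from $m\in M$; if $M\not\le\ker\chi$, then $(2)$ gives $\chi(g)=0=\chi(gm)$, using $g\notin N$ and $gm\notin N$. Thus every $\mathsf S$-class outside $N$ is a union of cosets of $M$, and $(1)$ follows, closing the cycle.

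The point requiring the most care is $(1)\Rightarrow(2)$: it hinges on identifying the $M$-invariant superclass functions of $\mathsf S$ with the superclass functions of $\mathsf S^{G/M}$, which in turn relies on having the correct descriptions of $\mathsf S^{G/M}$ and of the $\Delta$-product in hand. Once those structural facts are available the inner-product computation is routine, and the remaining implications are bookkeeping with the definitions of the vanishing-off subgroup and of superclass separation.
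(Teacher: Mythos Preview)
Your proof is correct, and the implications $(2)\Leftrightarrow(3)$ and $(2)\Rightarrow(1)$ match the paper's almost verbatim. The genuine difference is in $(1)\Rightarrow(2)$. The paper routes this through an auxiliary class-size condition: assuming $(1)$, each $\mathsf{S}$-class outside $N$ is a union of $M$-cosets, so $\lvert\mathrm{cl}_{\mathsf{S}}(g)\rvert=\lvert\mathrm{cl}_{\mathsf{S}_{G/M}}(gM)\rvert\cdot\lvert M\rvert$ for $g\notin N$; then the generalized column orthogonality relation (Lemma~\ref{column}) is applied at $g$ and at $gM$ to force $\sum_{\chi\in\mathrm{Irr}(\mathsf{S}\mid M)}\lvert\chi(g)\rvert^2/\chi(1)=0$. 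Your argument bypasses both the class-size step and column orthogonality: you truncate $\chi$ to $G\setminus N$, observe that the resulting function is $M$-periodic and hence lies in the inflation of $\mathrm{scf}(\mathsf{S}^{G/M})$, and then use ordinary row orthogonality of supercharacters to get $\sum_{g\notin N}\lvert\chi(g)\rvert^2=0$. Your route is slightly more self-contained (no need for the supercharacter second orthogonality relation), at the cost of invoking the identification of $M$-invariant superclass functions with $\mathrm{scf}(\mathsf{S}^{G/M})$; the paper's route has the side benefit of isolating the class-size characterization as a separate equivalent condition.
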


Notice that that Theorem~\ref{introdelta} gives a character-theoretic characterization of $\Delta$-products. Since all $\mathsf{S}$-normal subgroups of a supercharacter theory can be determined from the values of the characters, it follows that one may determine if $\mathsf{S}$ is a nontrivial $\Delta$-product with the same information. In particular, this can be determined from the supercharacter table of $\mathsf{S}$---an analog of the character table where the rows are indexed by supercharacters and the columns are indexed by superclasses. Just like the character table of $G$, the supercharacter table of $\mathsf{S}$ contains a lot of information about $\mathsf{S}$ in a condensed form. For example, one may find all of $\mathsf{S}$-normal subgroups of $G$, all of the $\mathsf{S}$-superclass sizes, as well as analogs $\mathsf{Z}(\mathsf{S})$ of the center of $G$, and $[G,\mathsf{S}]$ of the commutator subgroup of $G$. Other properties beyond the scope this paper can also be found (e.g. see \cite{SB18nil}). As a consequence of Theorem~\ref{introdelta}, we have that one may also determine if $\mathsf{S}$ is a nontrivial $\Delta$-product from the supercharacter table of $\mathsf{S}$. This gives an affirmative answer a question posed to the first author from his Ph.D. advisor Dr. Nathaniel Thiem.

Specializing the case where $M=N$, we have a similar result for $\ast$-products. 

\begin{introthm}\label{introstarprod}Let $\mathsf{S}$ be a supercharacter theory of $G$, and let $N$ be $\mathsf{S}$-normal. The following are equivalent:
\begin{enumerate}[label={\bf(\arabic*)}]\openup5pt
\item$\mathsf{S}$ is a $\ast$-product over $N$;
\item for each $g\in G\setminus N$ and for each supercharacter $\chi$ not containing $N$ in its kernel, $\chi(g)=0$; and, for each $g\in N$, there exists a supercharacter $\chi$ satisfying $N\nsubseteq\ker(\chi)$ and $\chi(g)\neq0$;
\item${\bf V}(\mathsf{S}\mid N)=N$.
\end{enumerate}
\end{introthm}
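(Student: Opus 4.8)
Since the $\ast$-product over $N$ is precisely the $\Delta$-product over the pair $N$ and $N$, as noted just before the statement, the plan is to read off most of Theorem~\ref{introstarprod} from Theorem~\ref{introdelta} specialized to $M=N$, and then to observe that the two conditions of Theorem~\ref{introstarprod} not already accounted for hold automatically. Concretely, Theorem~\ref{introdelta} with $M=N$ shows that the following are equivalent: (1); the first assertion in (2), namely that $\chi(g)=0$ whenever $g\in G\setminus N$ and $N\nsubseteq\ker(\chi)$; and the inequality ${\bf V}(\mathsf{S}\mid N)\le N$. Thus it suffices to prove, for an arbitrary supercharacter theory $\mathsf{S}$ and $\mathsf{S}$-normal $N$, that (i) $N\le{\bf V}(\mathsf{S}\mid N)$ always holds, and (ii) the second assertion in (2) --- that every $g\in N$ satisfies $\chi(g)\neq 0$ for some supercharacter $\chi$ with $N\nsubseteq\ker(\chi)$ --- always holds. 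Granting (i) and (ii), condition (3) becomes equivalent to ${\bf V}(\mathsf{S}\mid N)\le N$, and (2) collapses to its first assertion, so all three of (1), (2), (3) are equivalent to ${\bf V}(\mathsf{S}\mid N)\le N$, which gives the theorem.

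Both (i) and (ii) reduce to the single claim that for each $g\in N$ there is a supercharacter $\chi$ with $N\nsubseteq\ker(\chi)$ and $\chi(g)\neq 0$; for (i), recall that ${\bf V}(\mathsf{S}\mid N)$ is generated by the elements at which some such $\chi$ is nonzero. We may assume $N\neq 1$, the case $N=1$ being degenerate. For $g=1$ there is nothing to do once we know that some supercharacter fails to contain $N$ in its kernel, which holds because otherwise $N$ would lie in the kernel of every supercharacter, and since the supercharacters separate superclasses while $\{1\}$ is itself a superclass, this would force $N=1$. For $g\in N$ with $g\neq 1$, I would invoke the column orthogonality relation for supercharacters at the superclass of $g$ and the superclass $\{1\}$ of the identity, which reads $\sum_{\chi}\chi(1)\langle\chi,\chi\rangle^{-1}\,\overline{\chi(g)}=0$. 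Splitting this sum according to whether $N\subseteq\ker(\chi)$, and using $\overline{\chi(g)}=\chi(1)$ for the terms with $N\subseteq\ker(\chi)$, one is left with
\[\sum_{N\nsubseteq\ker(\chi)}\chi(1)\langle\chi,\chi\rangle^{-1}\,\overline{\chi(g)}\;=\;-\sum_{N\subseteq\ker(\chi)}\chi(1)^{2}\langle\chi,\chi\rangle^{-1},\]
whose right-hand side is strictly negative since the trivial supercharacter contributes. Hence some supercharacter $\chi$ with $N\nsubseteq\ker(\chi)$ has $\chi(g)\neq 0$, establishing the claim and therefore (i) and (ii).

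Combining this with Theorem~\ref{introdelta} yields the equivalence of (1), (2), and (3). I do not anticipate a genuine obstacle: the substantive work is already contained in Theorem~\ref{introdelta}, and what remains is essentially bookkeeping --- keeping careful track of which supercharacters contain $N$ in their kernel when the orthogonality sum is split, and dispatching the degenerate case $N=1$ (where no supercharacter can fail to contain $N$ in its kernel) by convention.
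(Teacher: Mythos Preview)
Your proposal is correct and follows essentially the same route as the paper: both reduce to Theorem~\ref{introdelta} with $M=N$ and dispatch the residual content (that every $g\in N$ is seen by some $\chi\in\mathrm{Irr}(\mathsf{S}\mid N)$, equivalently $N\le\mathbf{V}(\mathsf{S}\mid N)$) via column orthogonality. The only cosmetic differences are that the paper argues cyclically $(1)\Rightarrow(2)\Rightarrow(3)\Rightarrow(1)$ and uses the diagonal relation of Lemma~\ref{column} at $h=g$ to reach the contradiction $\lvert\mathrm{cl}_{\mathsf{S}}(g)\rvert=\lvert N\rvert$, whereas you use the off-diagonal relation between $g$ and $1$; the containment $N\le\mathbf{V}(\mathsf{S}\mid N)$ that you establish is also recorded separately in the paper as Lemma~\ref{vanishingcontainment}.
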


Some of this work is part of first author’s Ph.D. thesis at the University of Colorado Boulder under the supervision of Nathaniel Thiem. The first author expresses his gratitude to Dr. Thiem for the advice and support accompanying this work.

\section{Supercharacter theory}
In this section, we review some basics of supercharacter theory and supernormality, as well as set some notation. We then discuss two subgroups that can be defined for any supercharacter theory of a finite group $G$. 

Let $\mathrm{cf}(G)$ be the space of class functions of $G$; i.e.
\[\mathrm{cf}(G)=\{\alpha:G\to\mathbb{C}\mid \alpha(x^{-1}gx)=\alpha(g)\ \text{for all $x,g\in G$}\}.\]
Then $\mathrm{cf}(G)$ is a unital algebra under the convolution product $\ast$, defined by
\[(\alpha\ast\beta)(g)=\frac{1}{\norm{G}}\sum_{x\in G}\alpha(xg)\beta(x^{-1}).\]
It is easy to see that $\mathrm{cf}(G)$ is also a unital algebra under the pointwise product. For our purposes, a {\bf supercharacter theory} $\mathsf{S}$ of a finite group $G$ a subspace of $\mathrm{cf}(G)$ that is a unital subalgebra with respect to both the convolution and pointwise products. This subalgebra is semisimple with respect to both product, and by finding bases of idemptotents one obtains a distinguished set of characters $\mathrm{Irr}(\mathsf{S})$, called $\mathsf{S}$-{\bf irreducible characters} and a partition $\mathrm{Cl}(\mathsf{S})$ of $G$ into $\mathsf{S}$-{\bf classes}. Moreover, the sets $\mathrm{Irr}(\chi)$ of irreducible constituents of $\chi$, where $\chi$ ranges over the elements of $\mathrm{Irr}(\mathsf{S})$ gives a partition of $\mathrm{Irr}(G)$, and 
\[\chi=\sum_{\xi\in\mathrm{Irr}(\chi)}\xi(1)\xi\]
for each $\chi\in\mathrm{Irr}(\mathsf{S})$.

We remark here that our definition of $\mathrm{Irr}(\mathsf{S})$ yields the equation
\[\norm{G}=\sum_{\chi\in\mathrm{Irr}(\mathsf{S})}\chi(1),\]
which appears strange when compared to familiar equation
\[\norm{G}=\sum_{\chi\in\mathrm{Irr}(G)}\chi(1)^2.\]
Of course, from the definition of the elements of $\mathrm{Irr}(\mathsf{S})$, the first statement follows from the second. Also, in the event that $\mathsf{S}=\mathsf{m}(G)$, the usual character theory of $G$, the elements of $\mathrm{Irr}(\mathsf{S})$ have the form $\chi(1)\chi$ for some $\chi\in\mathrm{Irr}(G)$, which explains the perceived discrepancy. 
\begin{rem}
To see the original definition of supercharacter theory given by Diaconis and Isaacs, see \cite{ID07}. To see that the original definition and the one given here are equivalent, one may see, for example, \cite{SBJH18} or \cite{AH12}.
\end{rem}

Given a subset $X\subseteq\mathrm{Irr}(G)$, we will write $\sigma_X=\sum_{\xi\in X}\xi(1)\xi$. In particular, this means that $\chi=\sigma_{\mathrm{Irr}(\chi)}$ for each $\chi\in\mathrm{Irr}(\mathsf{S})$. For each $g\in G$, we will write $\mathrm{cl}_{\mathsf{S}}(g)$ for the $\mathsf{S}$-class containing $G$. We will also let $\mathrm{SCT}(G)$ denote the set of all supercharacter theories of $G$.

A group $G$ will always be assumed finite. Let $\mathsf{S}$ be a supercharacter theory of $G$. A subgroup $N$ of $G$ is called $\mathsf{S}$-{\bf normal} if $N$ is a union of $\mathsf{S}$-classes of $G$. In this event, we will write $N\lhd_{\mathsf{S}}G$. Given an $\mathsf{S}$-normal subgroup $N$ of $G$, Hendrickson showed in \cite[Propostion 6.4]{AH12} that one may associate a supercharacter theory $\mathsf{S}_N$ of $N$, where
\[\mathrm{Irr}(\mathsf{S}_N)=\bigl\{\sigma_{\mathrm{Irr}(\chi_N)}:\chi\in\mathrm{Irr}(\mathsf{S})\}\]
and
\[\mathrm{Cl}(\mathsf{S}_N)=\{\mathrm{cl}_{\mathsf{S}}(g):g \in N\}.\]
It was also shown that the quotient $G/N$ has an induced supercharacter theory $\mathsf{S}_{G/N}$ as well --- the $\mathsf{S}_{G/N}$-irreducible characters are just the $\mathsf{S}$-irreducible characters that contain $N$ in their kernels, naturally considered characters of $G/N$, and the $\mathsf{S}_{G/N}$-classes are the images of the $\mathsf{S}$-classes under the canonical surjection $G\to G/N$. 

As an immediate consequence of the construction of $\mathsf{S}_{G/N}$, we have that 
\[\mathrm{Irr}(G/N)=\bigcup_{\substack{\chi\in\mathrm{Irr}(\mathsf{S})\\N\subseteq\ker(\chi)}}\mathrm{Irr}(\chi),\]
where the characters of $G/N$ are being naturally identified with the characters of $G$ with $N$ in their kernel. In particular, this means that every $\mathsf{S}$-normal subgroup arises as the intersection of kernels of some $\mathsf{S}$-irreducible characters, a fact proved by Marberg (see \cite[Proposition 2.1]{EM11}). 

Given these facts, it will be beneficial later to introduce some notation. We will denote by $\mathrm{Irr}(\mathsf{S}/N)$ the set of $\mathsf{S}$-irreducible characters with $N$ in their kernels, and we will let $\mathrm{Irr}(\mathsf{S}\mid N)$ denote the set of  $\mathsf{S}$-irreducible characters not containing $N$ in their kernels. In particular, $\mathrm{Irr}(\mathsf{S})$ is the disjoint union of $\mathrm{Irr}(\mathsf{S}/N)$ and $\mathrm{Irr}(\mathsf{S}\mid N)$.  

The induced theories described above allow for one to take supercharacter theories of normal subgroups and quotients of $G$, and \enquote{glue} them together to form supercharacter theories of $G$, as long as some suitable conditions are satisfied. This construction is known as a $\Delta$-product.
\begin{thm}[{\normalfont\cite[Theorem 7.2]{AH12}}]\label{deltaproddef}
Let $H$ and $N$ be normal subgroups of $G$, with $H\le N$. Let $\mathsf{S}$ be a supercharacter theory of $N$ for which $H\lhd_{\mathsf{S}}N$, and let $\mathsf{T}$ be a supercharacter theory of $G/H$ for which $N/H\lhd_{\mathsf{T}}G/H$. Assume furthermore that every $\mathsf{S}$-class is a union of $G$-conjugacy classes and that $\mathsf{S}_{N/H}=\mathsf{T}_{N/H}$. Then $\mathsf{E}$ is a supercharacter theory of $G$, where
\[\mathrm{Irr}(\mathsf{E})=\bigl\{\chi^G:\chi\in\mathrm{Irr}(\mathsf{S}\mid H)\bigr\}\cup\mathrm{Irr}(\mathsf{T}),\]
and
\[\mathrm{Cl}(\mathsf{E})=\mathrm{Cl}(\mathrm{S})\cup\{\pi^{-1}(K):K\in\mathsf{T},\ N/H\nsubseteq K\},\]
and $\pi:G\to G/H$ is the canonical projection.
\end{thm}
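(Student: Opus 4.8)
The plan is to verify directly that $\mathrm{Cl}(\mathsf{E})$ and $\mathrm{Irr}(\mathsf{E})$ satisfy the classical defining conditions of a supercharacter theory of $G$ --- that $\mathrm{Cl}(\mathsf{E})$ is a partition of $G$ with $\{1\}$ a block, that each member of $\mathrm{Irr}(\mathsf{E})$ is a character constant on every block of $\mathrm{Cl}(\mathsf{E})$, that the sets $\mathrm{Irr}(\psi)$ for $\psi\in\mathrm{Irr}(\mathsf{E})$ partition $\mathrm{Irr}(G)$ with $\psi=\sigma_{\mathrm{Irr}(\psi)}$, and that $|\mathrm{Irr}(\mathsf{E})|=|\mathrm{Cl}(\mathsf{E})|$ --- and then to invoke the equivalence of definitions recorded in the remark above. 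The partition claim is pure bookkeeping: since $N/H$ is a union of $\mathsf{T}$-classes, a $\mathsf{T}$-class $K$ has $N/H\nsubseteq K$ exactly when $K\subseteq(G/H)\setminus(N/H)$, so the blocks $\pi^{-1}(K)$ partition $G\setminus N$, while $\mathrm{Cl}(\mathsf{S})$ partitions $N$ and contains $\{1\}$; every block is a union of $G$-conjugacy classes --- those in $\mathrm{Cl}(\mathsf{S})$ by hypothesis, the others because $\pi^{-1}$ carries unions of $(G/H)$-classes to unions of $G$-classes.

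The arithmetic engine for the constancy of the characters is the identity $(\chi^G)_N=[G:N]\,\chi$, valid for every $\chi\in\mathrm{Irr}(\mathsf{S})$: the hypothesis that each $\mathsf{S}$-class is a union of $G$-classes makes each $\mathsf{S}$-class $G$-invariant, so $\mathrm{scf}(\mathsf{S})$ consists of $G$-invariant class functions on $N$; hence $\chi(x^{-1}nx)=\chi(n)$ for all $x\in G$, $n\in N$, and averaging the induction formula over $G$ yields the identity. It follows that, for $\chi\in\mathrm{Irr}(\mathsf{S}\mid H)$, the character $\chi^G$ vanishes on $G\setminus N$ (as $N\lhd G$) and equals $[G:N]\chi$ on $N$, so it is constant on every $\mathsf{S}$-class and (trivially) on every block $\pi^{-1}(K)$. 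An inflated $\mathsf{T}$-irreducible $\psi$ is manifestly constant on the blocks $\pi^{-1}(K)$, and it is constant on an $\mathsf{S}$-class $C$ because $\pi(C)$ is a single $\mathsf{S}_{N/H}$-class, hence a single $\mathsf{T}_{N/H}$-class, hence a single $\mathsf{T}$-class inside $N/H$ --- this is the step that uses $H\lhd_{\mathsf{S}}N$, $N/H\lhd_{\mathsf{T}}G/H$, and $\mathsf{S}_{N/H}=\mathsf{T}_{N/H}$.

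The crux is the constituent count. The inflated $\mathsf{T}$-irreducibles contribute precisely the blocks of the $\mathsf{T}$-partition of $\mathrm{Irr}(G/H)$, each once, so it remains to handle $\mathrm{Irr}(G\mid H)$. $G$-invariance of the supercharacters of $\mathsf{S}$ forces each $\mathrm{Irr}(\chi)$, $\chi\in\mathrm{Irr}(\mathsf{S})$, to be a union of $G$-orbits on $\mathrm{Irr}(N)$, and the partition property of $\{\mathrm{Irr}(\chi)\}$ together with the description $\mathrm{Irr}(N/H)=\bigcup_{\chi\in\mathrm{Irr}(\mathsf{S}/H)}\mathrm{Irr}(\chi)$ shows that $\chi\in\mathrm{Irr}(\mathsf{S}\mid H)$ if and only if $\mathrm{Irr}(\chi)\subseteq\mathrm{Irr}(N\mid H)$. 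Now let $\eta\in\mathrm{Irr}(G\mid H)$; Clifford's theorem gives $\eta_N=e\sum_{\xi\in\mathrm{Irr}(\eta_N)}\xi$ with $\mathrm{Irr}(\eta_N)$ a single $G$-orbit contained in $\mathrm{Irr}(N\mid H)$, hence inside a unique $\mathrm{Irr}(\chi)$. Frobenius reciprocity then gives $[(\chi')^G,\eta]=[\chi',\eta_N]$, which equals $\eta(1)=e\sum_{\xi\in\mathrm{Irr}(\eta_N)}\xi(1)$ when $\mathrm{Irr}(\eta_N)\subseteq\mathrm{Irr}(\chi')$ and $0$ otherwise; thus $\chi^G=\sigma_{\mathrm{Irr}(\chi^G)}$, $\mathrm{Irr}(\chi^G)\subseteq\mathrm{Irr}(G\mid H)$, and the sets $\mathrm{Irr}(\chi^G)$ are pairwise disjoint and cover $\mathrm{Irr}(G\mid H)$ (each $\eta\in\mathrm{Irr}(G\mid H)$ lying in $\mathrm{Irr}(\chi^G)$ for exactly one $\chi$, namely the unique $\chi$ with $\mathrm{Irr}(\eta_N)\subseteq\mathrm{Irr}(\chi)$). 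The count then closes: $|\mathrm{Cl}(\mathsf{E})|=|\mathrm{Cl}(\mathsf{S})|+|\mathrm{Cl}(\mathsf{T})|-|\mathrm{Cl}(\mathsf{T}_{N/H})|$, and substituting $|\mathrm{Cl}(\mathsf{S})|=|\mathrm{Irr}(\mathsf{S})|=|\mathrm{Irr}(\mathsf{S}\mid H)|+|\mathrm{Irr}(\mathsf{S}/H)|$ with $|\mathrm{Irr}(\mathsf{S}/H)|=|\mathrm{Cl}(\mathsf{S}_{N/H})|$, and using $\mathsf{S}_{N/H}=\mathsf{T}_{N/H}$ and $|\mathrm{Cl}(\mathsf{T})|=|\mathrm{Irr}(\mathsf{T})|$, the $N/H$-terms cancel to leave $|\mathrm{Cl}(\mathsf{E})|=|\mathrm{Irr}(\mathsf{S}\mid H)|+|\mathrm{Irr}(\mathsf{T})|=|\mathrm{Irr}(\mathsf{E})|$. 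I expect this Clifford-theoretic step to be the main obstacle: the delicate point is that the induced characters $\chi^G$ must be genuine supercharacters, i.e. that the multiplicities in the decomposition of $\chi^G$ come out to exactly $\eta(1)$ so that $\chi^G=\sigma_{\mathrm{Irr}(\chi^G)}$; by contrast the partition and constancy verifications, though somewhat lengthy, are routine.
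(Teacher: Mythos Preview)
The paper does not supply its own proof of this theorem: it is quoted verbatim from Hendrickson \cite[Theorem~7.2]{AH12} and used as a black box, so there is no argument in the paper to compare yours against.

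That said, your direct verification is sound. The only step worth calling out is the constituent computation, which you flagged yourself. The identity $(\chi^G)_N=[G:N]\,\chi$ follows exactly as you say from the $G$-invariance of the $\mathsf{S}$-classes, and it forces each $\mathrm{Irr}(\chi)$ to be a union of $G$-orbits on $\mathrm{Irr}(N)$; together with the fact that $\chi\in\mathrm{Irr}(\mathsf{S}\mid H)$ iff $\mathrm{Irr}(\chi)\subseteq\mathrm{Irr}(N\mid H)$ (which uses that $H\lhd G$ so that a single $G$-orbit of constituents of $\eta_N$ lies either entirely inside or entirely outside $\mathrm{Irr}(N/H)$), the Clifford--Frobenius computation
\[
[\chi^G,\eta]=[\chi,\eta_N]=\sum_{\xi\in\mathrm{Irr}(\chi)\cap\mathrm{Irr}(\eta_N)}e\,\xi(1)
\]
does land on $\eta(1)$ or $0$, giving $\chi^G=\sigma_{\mathrm{Irr}(\chi^G)}$ and the required disjointness. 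The cardinality bookkeeping and the constancy check on $\mathsf{S}$-classes via $\mathsf{S}_{N/H}=\mathsf{T}_{N/H}$ are correct as written. This is essentially Hendrickson's original argument; there is nothing to fix.
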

The supercharacter theory of Theorem~\ref{deltaproddef} is called the $\Delta$-product of $\mathsf{S}$ and $\mathsf{T}$, and is denoted $\mathsf{S}\mathop{\Delta}\mathsf{T}$. In the special case that $H=N$, the $\Delta$-product is called a $\ast$-product, and $\mathsf{S}\mathop{\Delta}\mathsf{T}$ is denoted by $\mathsf{S}\ast\mathsf{T}$. That is, 
\[\mathrm{Irr}(\mathsf{S}\ast\mathsf{T})=\bigl\{\chi^G:\chi\in\mathrm{Irr}(\mathsf{S})\setminus\{\mathbbm{1}\}\bigr\}\cup\mathrm{Irr}(\mathsf{T}),\]
and
\[\mathrm{Cl}(\mathsf{S}\ast\mathsf{T})=\mathrm{Cl}(\mathrm{S})\cup\{\pi^{-1}(K):K\in\mathsf{T}\setminus\{H\}\},\]
and $\pi:G\to G/H$ is the canonical projection.

\section{The subgroups ${\bf Z}(\mathsf{S})$ and $[G,\mathsf{S}]$}
In this section, we discuss $\mathsf{S}$-normal generalizations of the center and commutator subgroup of $G$. Let $\mathsf{S}$ be a supercharacter theory of $G$, and let
\[{\bf Z}(\mathsf{S})=\{g\in G:\norm{\mathrm{cl}_{\mathsf{S}}(g)}=1\}.\]
It is well-known that ${\bf Z}(\mathsf{S})$ is a subgroup of $G$, and therefore it is an $\mathsf{S}$-normal subgroup of $G$ contained in the center of $G$. 

The group $\mathbf{Z}(\mathsf{S})$ has a lot in common with its classical counterpart, as is illustrated by the following basic results, all appearing in \cite{SB18nil}, will be needed later.

\begin{lem}[{\normalfont\cite[Lemma 3.4]{SB18nil}}]\label{centerrestriction}
Let $\mathsf{S}$ be a supercharacter theory of $G$, and let $\chi\in\mathrm{Irr}(\mathsf{S})$. Then $\chi_{{\bf Z}(\mathsf{S})}=\chi(1)\vartheta_\chi$ for some irreducible character $\vartheta_\chi$ of ${\bf Z}(\mathsf{S})$.
\end{lem}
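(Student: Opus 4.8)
The plan is to establish the pointwise bound $|\chi(z)|\le\chi(1)$ for $z\in\mathbf{Z}(\mathsf{S})$, to upgrade it to the equality $|\chi(z)|=\chi(1)$ by a global counting argument, and then to read $\vartheta_\chi$ off from the equality case of the triangle inequality.

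First I would set up the standard linear algebra over $\mathbf{Z}(\mathsf{S})$. Since $\mathbf{Z}(\mathsf{S})\le Z(G)$, Schur's lemma gives for each $\xi\in\mathrm{Irr}(G)$ a linear character $\lambda_\xi$ of $\mathbf{Z}(\mathsf{S})$ with $\xi_{\mathbf{Z}(\mathsf{S})}=\xi(1)\lambda_\xi$, so $|\xi(z)|=\xi(1)$ for every $z\in\mathbf{Z}(\mathsf{S})$. Writing $\chi=\sum_{\xi\in\mathrm{Irr}(\chi)}\xi(1)\xi$ and recalling that $\chi(1)=\sum_{\xi\in\mathrm{Irr}(\chi)}\xi(1)^2$, the triangle inequality yields $|\chi(z)|=\bigl|\sum_{\xi\in\mathrm{Irr}(\chi)}\xi(1)^2\lambda_\xi(z)\bigr|\le\sum_{\xi\in\mathrm{Irr}(\chi)}\xi(1)^2=\chi(1)$, with equality at a given $z$ exactly when the scalars $\lambda_\xi(z)$, $\xi\in\mathrm{Irr}(\chi)$, all coincide.

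The crux is forcing this equality, and here the defining property of $\mathbf{Z}(\mathsf{S})$ enters: for $z\in\mathbf{Z}(\mathsf{S})$ the singleton $\{z\}$ is an $\mathsf{S}$-class, so its indicator function $\delta_z$ lies in $\mathsf{S}$ and is therefore a $\mathbb{C}$-linear combination of the supercharacters. Using that distinct members of $\mathrm{Irr}(\mathsf{S})$ are orthogonal (their sets of irreducible constituents are disjoint) and that $\langle\chi,\chi\rangle=\chi(1)$, one computes $\delta_z=\frac{1}{|G|}\sum_{\chi\in\mathrm{Irr}(\mathsf{S})}\frac{\overline{\chi(z)}}{\chi(1)}\chi$; evaluating both sides at $z$ gives $\sum_{\chi\in\mathrm{Irr}(\mathsf{S})}\frac{|\chi(z)|^2}{\chi(1)}=|G|=\sum_{\chi\in\mathrm{Irr}(\mathsf{S})}\chi(1)$, the last equality being the identity recorded earlier. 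Since each summand on the left is at most the corresponding summand on the right by the bound of the previous paragraph, every one of these inequalities must be an equality; in particular $|\chi(z)|=\chi(1)$ for all $\chi\in\mathrm{Irr}(\mathsf{S})$ and all $z\in\mathbf{Z}(\mathsf{S})$.

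To conclude, fix $\chi\in\mathrm{Irr}(\mathsf{S})$. The equality case now applies at every $z\in\mathbf{Z}(\mathsf{S})$, so the linear characters $\lambda_\xi$ with $\xi\in\mathrm{Irr}(\chi)$ all agree on $\mathbf{Z}(\mathsf{S})$; let $\vartheta_\chi$ denote their common value, a linear---hence irreducible---character of $\mathbf{Z}(\mathsf{S})$. Then $\chi_{\mathbf{Z}(\mathsf{S})}=\sum_{\xi\in\mathrm{Irr}(\chi)}\xi(1)\,\xi_{\mathbf{Z}(\mathsf{S})}=\bigl(\sum_{\xi\in\mathrm{Irr}(\chi)}\xi(1)^2\bigr)\vartheta_\chi=\chi(1)\vartheta_\chi$, as required. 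I expect the only substantive point to be the upgrade in the third paragraph: it is precisely the fact that $\delta_z$ is a superclass function exactly when $z\in\mathbf{Z}(\mathsf{S})$, together with the global identity $|G|=\sum_{\chi}\chi(1)$, that converts the soft inequality $|\chi(z)|\le\chi(1)$ into an equality; the remaining steps are routine bookkeeping with the orthogonality relations of $\mathsf{S}$.
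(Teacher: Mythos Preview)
Your argument is correct. Note, however, that the paper does not supply its own proof of this lemma: it is quoted verbatim from \cite[Lemma~3.4]{SB18nil} and stated without proof, so there is no in-paper argument to compare against.

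A couple of remarks on your approach. The identity you derive in the third paragraph,
\[
\sum_{\chi\in\mathrm{Irr}(\mathsf{S})}\frac{|\chi(z)|^2}{\chi(1)}=|G|,
\]
is precisely the diagonal case of the generalized column orthogonality relation that the paper records later as Lemma~\ref{column} (again cited from \cite{SB18nil}), specialized to $g=h=z$ with $|\mathrm{cl}_{\mathsf{S}}(z)|=1$. So in effect you are proving Lemma~\ref{centerrestriction} by combining column orthogonality with the degree identity $|G|=\sum_{\chi\in\mathrm{Irr}(\mathsf{S})}\chi(1)$ and the triangle inequality; this is clean and self-contained. Your computation of the expansion coefficients of $\delta_z$ via $\langle\chi,\chi\rangle=\chi(1)$ is also correct, and the equality-case analysis of the triangle inequality for $\bigl|\sum_\xi \xi(1)^2\lambda_\xi(z)\bigr|$ with strictly positive weights $\xi(1)^2$ does force all the $\lambda_\xi(z)$ to coincide, as you claim.
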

\begin{lem}[{\normalfont\cite[Proposition 3.5 (3)]{SB18nil}}]\label{centercharacter}
Let $\mathsf{S}$ be a supercharacter theory of $G$. Then
\[\mathbf{Z}(\mathsf{S})=\bigcap_{\chi\in\mathrm{Irr}(\mathsf{S})}\mathbf{Z}(\chi).\]
\end{lem}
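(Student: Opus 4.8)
The plan is to prove the two inclusions separately. Throughout I use $\mathbf{Z}(\chi)=\{g\in G:\lvert\chi(g)\rvert=\chi(1)\}$, the fact that the sets $\mathrm{Irr}(\chi)$ partition $\mathrm{Irr}(G)$ as $\chi$ runs over $\mathrm{Irr}(\mathsf{S})$, and the fact (noted above) that $\mathbf{Z}(\mathsf{S})$ is contained in $\mathbf{Z}(G)$. For the inclusion $\mathbf{Z}(\mathsf{S})\subseteq\bigcap_{\chi\in\mathrm{Irr}(\mathsf{S})}\mathbf{Z}(\chi)$, fix $g\in\mathbf{Z}(\mathsf{S})$ and $\chi\in\mathrm{Irr}(\mathsf{S})$: by Lemma~\ref{centerrestriction}, $\chi_{\mathbf{Z}(\mathsf{S})}=\chi(1)\vartheta_\chi$ for some $\vartheta_\chi\in\mathrm{Irr}(\mathbf{Z}(\mathsf{S}))$, and since $\mathbf{Z}(\mathsf{S})$ is abelian the character $\vartheta_\chi$ is linear, so $\lvert\chi(g)\rvert=\chi(1)\lvert\vartheta_\chi(g)\rvert=\chi(1)$; hence $g\in\mathbf{Z}(\chi)$.

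For the reverse inclusion, let $g\in\bigcap_{\chi\in\mathrm{Irr}(\mathsf{S})}\mathbf{Z}(\chi)$. I would first show $g\in\mathbf{Z}(G)$. For each $\chi\in\mathrm{Irr}(\mathsf{S})$, writing $\chi=\sum_{\xi\in\mathrm{Irr}(\chi)}\xi(1)\xi$ yields
\[\chi(1)=\lvert\chi(g)\rvert\le\sum_{\xi\in\mathrm{Irr}(\chi)}\xi(1)\,\lvert\xi(g)\rvert\le\sum_{\xi\in\mathrm{Irr}(\chi)}\xi(1)^2=\chi(1),\]
so equality holds at every step. Equality in the second inequality forces $\lvert\xi(g)\rvert=\xi(1)$ for every $\xi\in\mathrm{Irr}(\chi)$; equality in the triangle inequality (every summand being nonzero) forces the values $\xi(g)$, $\xi\in\mathrm{Irr}(\chi)$, to share a common argument; combining these, $\xi(g)=\xi(1)\,\chi(g)/\chi(1)$ for all $\xi\in\mathrm{Irr}(\chi)$. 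Since the sets $\mathrm{Irr}(\chi)$ exhaust $\mathrm{Irr}(G)$, this gives $g\in\mathbf{Z}(\xi)$ for every $\xi\in\mathrm{Irr}(G)$, and hence $g\in\mathbf{Z}(G)$.

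It remains to prove $\mathrm{cl}_{\mathsf{S}}(g)=\{g\}$. Let $h\in\mathrm{cl}_{\mathsf{S}}(g)$. Each supercharacter is constant on $\mathsf{S}$-classes, so $\chi(h)=\chi(g)$ for every $\chi\in\mathrm{Irr}(\mathsf{S})$; in particular $\lvert\chi(h)\rvert=\chi(1)$, so $h$ also lies in $\bigcap_\chi\mathbf{Z}(\chi)$ and the identity of the previous paragraph applies with $h$ in place of $g$. Thus for each $\chi\in\mathrm{Irr}(\mathsf{S})$ and each $\xi\in\mathrm{Irr}(\chi)$,
\[\xi(h)=\xi(1)\,\frac{\chi(h)}{\chi(1)}=\xi(1)\,\frac{\chi(g)}{\chi(1)}=\xi(g),\]
so $\xi(h)=\xi(g)$ for all $\xi\in\mathrm{Irr}(G)$; therefore $h$ is $G$-conjugate to $g$, and since $g\in\mathbf{Z}(G)$ this forces $h=g$. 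Hence $\mathrm{cl}_{\mathsf{S}}(g)=\{g\}$, i.e. $g\in\mathbf{Z}(\mathsf{S})$.

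The delicate point is the last paragraph: the inclusion $\bigcap_\chi\mathbf{Z}(\chi)\subseteq\mathbf{Z}(\mathsf{S})$ is not a formal consequence of the classical identity $\bigcap_{\xi\in\mathrm{Irr}(G)}\mathbf{Z}(\xi)=\mathbf{Z}(G)$, because $\mathsf{S}$-classes can be strictly coarser than conjugacy classes. What rescues the argument is that on the set $\bigcap_\chi\mathbf{Z}(\chi)$ each supercharacter $\chi$ determines the values of every one of its irreducible constituents (they are all proportional to $\chi$ there, with ratio $\chi(g)/\chi(1)$), so two elements of this set that are not separated by the supercharacters cannot be separated by $\mathrm{Irr}(G)$ either.
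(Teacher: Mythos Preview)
The paper does not prove this lemma; it is quoted without argument from \cite[Proposition~3.5~(3)]{SB18nil}, so there is no in-paper proof to compare against.

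Your argument is correct. The forward inclusion is immediate from Lemma~\ref{centerrestriction} as you say. For the reverse inclusion, your equality-case analysis of the triangle inequality is valid and yields the key identity $\xi(g)=\xi(1)\,\chi(g)/\chi(1)$ for every $\xi\in\mathrm{Irr}(\chi)$; the last paragraph then correctly exploits constancy of supercharacters on $\mathsf{S}$-classes to transfer this identity to any $h\in\mathrm{cl}_{\mathsf{S}}(g)$, giving $\xi(h)=\xi(g)$ for all $\xi\in\mathrm{Irr}(G)$ and hence $h=g$ via $g\in\mathbf{Z}(G)$. The closing remark about why this is not a formal consequence of the classical identity is apt.

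As a side remark, the reverse inclusion also follows in one line from the generalized column orthogonality of Lemma~\ref{column} (likewise quoted from \cite{SB18nil}): if $\lvert\chi(g)\rvert=\chi(1)$ for every $\chi\in\mathrm{Irr}(\mathsf{S})$, then
\[
\frac{\lvert G\rvert}{\lvert\mathrm{cl}_{\mathsf{S}}(g)\rvert}=\sum_{\chi\in\mathrm{Irr}(\mathsf{S})}\frac{\lvert\chi(g)\rvert^{2}}{\chi(1)}=\sum_{\chi\in\mathrm{Irr}(\mathsf{S})}\chi(1)=\lvert G\rvert,
\]
so $\lvert\mathrm{cl}_{\mathsf{S}}(g)\rvert=1$. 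Your route avoids that dependency at the cost of the detour through $\mathbf{Z}(G)$; both are legitimate.
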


\begin{lem}[{\normalfont\cite[Proposition 3.5 (4)]{SB18nil}}]\label{centercharacter}
Let $\mathsf{S}$ be a supercharacter theory of $G$, let $\chi\in\mathrm{Irr}(\mathsf{S})$ and write $Z={\bf Z}(\chi)$. Then \[Z/\ker(\chi)={\bf Z}(\mathsf{S}_{G/\ker(\chi)}),\] and is cyclic.
\end{lem}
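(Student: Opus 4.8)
The plan is to reduce at once to the case that $\chi$ is faithful, and then to identify $\mathbf{Z}(\chi)$ with $\mathbf{Z}(\mathsf{S})$ by showing that $\chi$ restricts to $\mathbf{Z}(\chi)$ as $\chi(1)$ times a \emph{faithful} linear character.

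For the reduction, put $K=\ker(\chi)$. As $\chi\in\mathrm{scf}(\mathsf{S})$ is constant on $\mathsf{S}$-classes, $K=\{g\in G:\chi(g)=\chi(1)\}$ is a union of $\mathsf{S}$-classes, and $K=\bigcap_{\xi\in\mathrm{Irr}(\chi)}\ker(\xi)$ is normal, so $K\lhd_{\mathsf{S}}G$ and $\mathsf{S}_{G/K}$ is defined. Viewed on $G/K$, $\chi$ is a faithful $\mathsf{S}_{G/K}$-irreducible character $\bar\chi$, and since $K\le\mathbf{Z}(\chi)$ we have $\mathbf{Z}(\chi)/K=\mathbf{Z}(\bar\chi)$. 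Hence it suffices to prove: if $\chi\in\mathrm{Irr}(\mathsf{S})$ is faithful, then $\mathbf{Z}(\chi)=\mathbf{Z}(\mathsf{S})$, and this group is cyclic.

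So assume $\chi$ is faithful; set $Z=\mathbf{Z}(\chi)$ and $\chi=\sum_{\xi\in\mathrm{Irr}(\chi)}\xi(1)\xi$. The inclusion $\mathbf{Z}(\mathsf{S})\subseteq Z$ is immediate, since $\mathbf{Z}(\mathsf{S})=\bigcap_{\psi\in\mathrm{Irr}(\mathsf{S})}\mathbf{Z}(\psi)$ by the previous lemma. For the reverse inclusion, fix $g\in Z$. In the chain $\chi(1)=\norm{\chi(g)}\le\sum_{\xi}\xi(1)\norm{\xi(g)}\le\sum_{\xi}\xi(1)^2=\chi(1)$ both inequalities are equalities; the second forces $\norm{\xi(g)}=\xi(1)$ for every $\xi\in\mathrm{Irr}(\chi)$, and the equality case of the triangle inequality forces the ratios $\xi(g)/\xi(1)$ to be a single unit scalar $\lambda(g)$, the same for all $\xi\in\mathrm{Irr}(\chi)$. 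Consequently the representation $\rho=\bigoplus_{\xi\in\mathrm{Irr}(\chi)}\rho_\xi$, which affords $\sum_{\xi}\xi$, sends each $g\in Z$ to $\lambda(g)\,\mathrm{id}$; and $\rho$ is faithful, because $\ker(\rho)=\bigcap_{\xi}\ker(\xi)=\ker(\chi)$ is trivial. Therefore $g\mapsto\lambda(g)$ is an injective homomorphism $Z\to\mathbb{C}^{\times}$, so $Z$ is cyclic, and $\chi_Z=\chi(1)\lambda$ for a faithful linear character $\lambda$ of $Z$.

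To complete the argument, observe that $\norm{\chi}$ is an $\mathsf{S}$-class function, so $Z=\{g\in G:\norm{\chi(g)}=\chi(1)\}$ is a union of $\mathsf{S}$-classes. Thus for $g\in Z$ and any $h\in\mathrm{cl}_{\mathsf{S}}(g)$ we have $h\in Z$ and $\chi(1)\lambda(g)=\chi(g)=\chi(h)=\chi(1)\lambda(h)$, whence $\lambda(g)=\lambda(h)$ and so $g=h$, since $\lambda$ is injective. Hence $\mathrm{cl}_{\mathsf{S}}(g)=\{g\}$, i.e.\ $g\in\mathbf{Z}(\mathsf{S})$, giving $Z\subseteq\mathbf{Z}(\mathsf{S})$. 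I expect this final step to be the crux: the essential point is that faithfulness of $\chi$ is precisely what forces $\lambda$ to be injective on $Z$, and that injectivity is what collapses every $\mathsf{S}$-class contained in $Z$ to a single point; the reduction to the faithful case and the cyclicity assertion are routine by comparison.
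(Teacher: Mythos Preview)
The paper does not supply a proof of this lemma; it is quoted without argument from the reference \cite{SB18nil}. Your proof is correct and self-contained: the reduction to the faithful case is routine, and the key step---that faithfulness of $\chi$ forces the linear character $\lambda$ on $Z=\mathbf{Z}(\chi)$ to be injective, so that $\chi$ separates the points of $Z$ and hence every $\mathsf{S}$-class contained in $Z$ is a singleton---is exactly what is needed. There is simply nothing in the present paper to compare your approach against.
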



We now describe an analog of the commutator subgroup, as introduced in \cite{SB18nil}. Let $H\le G$. Define the subgroup $[H,\mathsf{S}]$ by
\[[H,\mathsf{S}]=\inner{g^{-1}k:g\in H,\ k\in\mathrm{cl}_{\mathsf{S}}(g)}.\]
The following results generalize some basic properties of commutator subgroups, and will be needed in the remainder of this paper. The first one illustrates why this generalization of the commutator subgroup is useful from the lens of supercharacter theory.

\begin{lem}[{\normalfont\cite[Corollary 3.8]{SB18nil}}]\label{snormalcommutator}
Let $\mathsf{S}$ be a supercharacter theory of $G$, and let $N$ be an $\mathsf{S}$-normal. Then $[N,\mathsf{S}]=\inner{g^{-1}k:g\in N,\ k\in\mathrm{cl}_{\mathsf{S}}(g)}$ is $\mathsf{S}$-normal.
\end{lem}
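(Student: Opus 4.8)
The plan is to reduce to the case $N=G$ and then exhibit $[G,\mathsf{S}]$ as an intersection of kernels of linear characters that happen to be $\mathsf{S}$-class functions. To set up the reduction, note that since $N$ is $\mathsf{S}$-normal, for each $g\in N$ the $\mathsf{S}$-class $\mathrm{cl}_{\mathsf{S}}(g)$ is contained in $N$ and, by the description $\mathrm{Cl}(\mathsf{S}_N)=\{\mathrm{cl}_{\mathsf{S}}(h):h\in N\}$, coincides with the $\mathsf{S}_N$-class of $g$ in $N$; hence the two generating sets agree and $[N,\mathsf{S}]=[N,\mathsf{S}_N]$, the right-hand side being the subgroup attached to the supercharacter theory $\mathsf{S}_N$ of the group $N$ in exactly the way $[G,\mathsf{S}]$ is attached to $\mathsf{S}$. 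An $\mathsf{S}_N$-normal subgroup of $N$ is a union of $\mathsf{S}_N$-classes, hence a union of $\mathsf{S}$-classes, hence $\mathsf{S}$-normal; so it suffices to prove the statement when $N=G$, which is nothing but the $N=G$ case of the lemma itself.

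So assume $N=G$. Choosing $k=x^{-1}gx$ in a generator $g^{-1}k$ shows $[G,G]\le[G,\mathsf{S}]$, so $G/[G,\mathsf{S}]$ is a finite abelian group. Let $\Lambda$ be the set of linear characters of $G$ that are constant on $\mathsf{S}$-classes. If $\lambda\in\Lambda$, then $\lambda(g^{-1}k)=\lambda(g)^{-1}\lambda(k)=1$ for every generator $g^{-1}k$ of $[G,\mathsf{S}]$, so $[G,\mathsf{S}]\le\ker\lambda$; and $\ker\lambda$ is the union of those $\mathsf{S}$-classes on which $\lambda$ equals $1$, so it is an $\mathsf{S}$-normal subgroup. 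Conversely, if $x\notin[G,\mathsf{S}]$, then because the image of $x$ in the nontrivial finite abelian group $G/[G,\mathsf{S}]$ is not killed by all of its linear characters, there is a linear character $\mu$ of $G/[G,\mathsf{S}]$ with $\mu(x[G,\mathsf{S}])\ne1$; its inflation $\lambda$ to $G$ satisfies $[G,\mathsf{S}]\le\ker\lambda$ and $\lambda(x)\ne1$, and $\lambda$ is constant on $\mathsf{S}$-classes since $\lambda(g)^{-1}\lambda(k)=\lambda(g^{-1}k)=1$ for each generator $g^{-1}k$ of $[G,\mathsf{S}]\le\ker\lambda$. Hence $\lambda\in\Lambda$ and $x\notin\ker\lambda$. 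Putting the two directions together, $[G,\mathsf{S}]=\bigcap_{\lambda\in\Lambda}\ker\lambda$ is an intersection of $\mathsf{S}$-normal subgroups, and therefore $\mathsf{S}$-normal.

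The step I expect to be the main obstacle is that converse inclusion: one would like to pass to $G/[G,\mathsf{S}]$ and borrow its character theory, but a priori $[G,\mathsf{S}]$ is not known to be $\mathsf{S}$-normal, so the quotient carries no induced supercharacter theory. What rescues the argument is that $G/[G,\mathsf{S}]$ is abelian --- so ordinary linear characters suffice --- together with the elementary but essential observation that any linear character of $G$ trivial on $[G,\mathsf{S}]$ is automatically constant on $\mathsf{S}$-classes, hence lies in $\mathrm{scf}(\mathsf{S})$ with $\mathsf{S}$-normal kernel. For the same reason the reduction to $N=G$ is not merely cosmetic: when $N$ is a proper $\mathsf{S}$-normal subgroup the quotient $G/[N,\mathsf{S}]$ need be neither abelian nor equipped with an induced supercharacter theory, whereas $N$ always carries $\mathsf{S}_N$.
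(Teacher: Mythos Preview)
Your argument is correct. The paper does not supply its own proof of this lemma---it is quoted from \cite{SB18nil} as a citation---so there is no in-paper argument to compare against. Your route via the identity $[G,\mathsf{S}]=\bigcap_{\lambda\in\Lambda}\ker\lambda$, with $\Lambda$ the set of linear characters constant on $\mathsf{S}$-classes, is a clean and self-contained proof: the key observation that any linear character trivial on $[G,\mathsf{S}]$ is automatically constant on $\mathsf{S}$-classes is exactly what makes the abelianization argument go through without first knowing $[G,\mathsf{S}]$ is $\mathsf{S}$-normal. The reduction $[N,\mathsf{S}]=[N,\mathsf{S}_N]$ together with the remark that $\mathsf{S}_N$-normal subgroups of $N$ are unions of $\mathsf{S}$-classes (hence $\mathsf{S}$-normal in $G$) is likewise correct and lets you avoid repeating the argument for proper $N$.
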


The next result describes exactly which $\mathsf{S}$-characters are linear. 
\begin{lem}[{\normalfont\cite[Proposition 3.11]{SB18nil}}]\label{gsproperty}
Let $\mathsf{S}$ be a supercharacter theory of $G$, and let $N$ be $\mathsf{S}$-normal. Then $[G,\mathsf{S}]\le N$ if and only if ${\bf Z}(\mathsf{S}_{G/N})=G/N$.
\end{lem}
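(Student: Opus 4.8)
The plan is to unwind each side of the claimed equivalence into a statement about the $\mathsf{S}$-classes of $G$. Since $N\lhd_{\mathsf{S}}G$, Hendrickson's construction of the induced theory $\mathsf{S}_{G/N}$ recalled in Section~2 tells us that the $\mathsf{S}_{G/N}$-classes are precisely the images $\pi(C)$ of the $\mathsf{S}$-classes $C$ of $G$, where $\pi:G\to G/N$ is the canonical projection; in particular the $\mathsf{S}_{G/N}$-class of $\pi(g)$ equals $\pi(\mathrm{cl}_{\mathsf{S}}(g))$ for each $g\in G$. Consequently $\pi(g)\in\mathbf{Z}(\mathsf{S}_{G/N})$ if and only if $\norm{\pi(\mathrm{cl}_{\mathsf{S}}(g))}=1$, and since $g\in\mathrm{cl}_{\mathsf{S}}(g)$ this holds if and only if $\mathrm{cl}_{\mathsf{S}}(g)\subseteq gN$, i.e.\ $g^{-1}k\in N$ for every $k\in\mathrm{cl}_{\mathsf{S}}(g)$.

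Putting these together, $\mathbf{Z}(\mathsf{S}_{G/N})=G/N$ if and only if $g^{-1}k\in N$ for all $g\in G$ and all $k\in\mathrm{cl}_{\mathsf{S}}(g)$. On the other hand, by definition $[G,\mathsf{S}]=\inner{g^{-1}k:g\in G,\ k\in\mathrm{cl}_{\mathsf{S}}(g)}$, and since $N$ is a subgroup, $[G,\mathsf{S}]\le N$ holds exactly when each of these generators lies in $N$. Comparing the two conditions yields the equivalence.

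I do not anticipate a serious obstacle here, as the argument is largely a matter of chasing definitions. The one point that genuinely needs to be in place is the identification of the $\mathsf{S}_{G/N}$-classes with the $\pi$-images of the $\mathsf{S}$-classes --- equivalently, that membership of a coset $gN$ in $\mathbf{Z}(\mathsf{S}_{G/N})$ is detected entirely by whether the single $\mathsf{S}$-class $\mathrm{cl}_{\mathsf{S}}(g)$ fits inside that coset --- which is exactly what Hendrickson's construction of the quotient theory provides. Beyond that, one only uses that $\mathbf{Z}$ of any supercharacter theory is a subgroup (so that ``every coset is central'' is the same as $\mathbf{Z}(\mathsf{S}_{G/N})=G/N$) and that $[G,\mathsf{S}]\le N$ is equivalent to all of its listed generators lying in $N$.
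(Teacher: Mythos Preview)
Your argument is correct: unwinding the description of the $\mathsf{S}_{G/N}$-classes as images of $\mathsf{S}$-classes immediately gives that $\mathbf{Z}(\mathsf{S}_{G/N})=G/N$ is equivalent to every generator $g^{-1}k$ of $[G,\mathsf{S}]$ lying in $N$, which is exactly $[G,\mathsf{S}]\le N$. Note, however, that the paper does not supply its own proof of this lemma; it is quoted verbatim from \cite[Proposition 3.11]{SB18nil}, so there is no in-paper argument to compare against---your direct definition chase is precisely the kind of proof one would expect for this statement.
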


\section{Vanishing-off subgroup}

Let $\chi$ be a character of $G$. The {\bf vanishing-off subgroup} of $\chi$ is the subgroup 
\[{\bf V}(\chi)=\inner{g\in G:\chi(g)\neq0}.\]
Then ${\bf V}(\chi)$ is the smallest subgroup, $V\le G$ such that $\chi$ vanishes on $G\setminus V$. The vanishing-off subgroup is used extensively in Chapter 12 of \cite{MI76} to prove theorems relating the degrees of the irreducible characters $G$ to the structure of $G$. The following is a generalization of a well-known observation along these lines. We use $\inner{\ ,\ }$ to denote the usual inner product of characters (e.g. see \cite[Definition 2.16]{MI76}); that is for class functions $\alpha$ and $\beta$
\[\inner{\alpha,\beta}=\frac{1}{\norm{G}}\sum_{g\in G}\alpha(g)\overline{\beta(g)}.\]

\begin{lem}\label{indexsize}
Let $\mathsf{S}$ be a supercharacter theory of $G$, and let $N$ be $\mathsf{S}$-normal. Let $\chi\in\mathrm{Irr}(\mathsf{S}\mid N)$, and let $\psi\in\mathrm{Irr}(\mathsf{S}_N)$ satisfy $\inner{\psi,\chi_N}>0$. Then
\[\chi(1)/\psi(1)\le\norm{G:N},\]
with equality if and only if $\mathbf{V}(\chi)\le N$.
\end{lem}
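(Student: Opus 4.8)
The plan is to estimate $\|\chi\|^2 = \inner{\chi,\chi} = 1$ from below by restricting the sum defining the inner product to the subgroup $N$, using the fact that a supercharacter in $\mathrm{Irr}(\mathsf{S}\mid N)$ may fail to vanish outside $N$ but whatever mass it has there is nonnegative. Concretely, write $e = \chi(1)/\psi(1)$; I will first show $e$ is a positive integer (or at least a positive rational bounded appropriately), and then show $e \le \norm{G:N}$ by a counting argument on $N$, with the equality case forced exactly when $\chi$ vanishes on $G\setminus N$, i.e. when $\mathbf{V}(\chi)\le N$.

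First I would analyze $\chi_N$. Since $N$ is $\mathsf{S}$-normal, the restriction $\chi_N$ lies in $\mathrm{scf}(\mathsf{S}_N)$ and is a nonnegative integer combination of the $\mathsf{S}_N$-irreducible characters; moreover, because $\chi = \sigma_{\mathrm{Irr}(\chi)}$ and the $\mathsf{S}_N$-irreducibles have the form $\sigma_{\mathrm{Irr}(\xi_N)}$, one expects $\chi_N = e\,\psi$ for the given $\psi$ with $\inner{\psi,\chi_N}>0$ — that is, $\chi_N$ is a \emph{multiple} of a single $\mathsf{S}_N$-irreducible, analogous to Lemma~\ref{centerrestriction}. (If one prefers to avoid proving homogeneity outright, it suffices to use $\inner{\chi_N,\chi_N}\ge e\inner{\psi,\chi_N}\ge e$, obtained from $\inner{\chi_N,\psi} = e\inner{\psi,\psi}/(\text{normalization})$; the key inequality $\inner{\chi_N,\chi_N}\ge \chi(1)/\psi(1)$ is what I really need.) Then I compute, splitting the inner product over $G$ into the part on $N$ and the part on $G\setminus N$:
\[
1 = \inner{\chi,\chi} = \frac{1}{\norm G}\sum_{g\in N}\norm{\chi(g)}^2 + \frac{1}{\norm G}\sum_{g\in G\setminus N}\norm{\chi(g)}^2 \ge \frac{\norm N}{\norm G}\cdot\inner{\chi_N,\chi_N}_N \ge \frac{1}{\norm{G:N}}\cdot\frac{\chi(1)}{\psi(1)},
\]
where $\inner{\cdot,\cdot}_N$ denotes the inner product on $N$. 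Rearranging gives $\chi(1)/\psi(1)\le\norm{G:N}$.

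For the equality case: equality throughout forces $\sum_{g\in G\setminus N}\norm{\chi(g)}^2 = 0$, i.e. $\chi(g)=0$ for all $g\notin N$, which is precisely $\mathbf{V}(\chi)\le N$; and conversely, if $\mathbf{V}(\chi)\le N$ then $\chi$ vanishes off $N$, so $1 = \inner{\chi,\chi} = \frac{\norm N}{\norm G}\inner{\chi_N,\chi_N}_N$, and one needs $\inner{\chi_N,\chi_N}_N = \chi(1)/\psi(1)$ exactly — this is where homogeneity of $\chi_N$ (that $\chi_N = e\psi$ with $\psi$ an $\mathsf{S}_N$-irreducible, hence $\inner{\psi,\psi}_N = \psi(1)$ in the normalization where $\mathsf{S}$-irreducibles satisfy $\inner{\chi,\chi}=\chi(1)$) is genuinely used. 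I expect the main obstacle to be this homogeneity statement — verifying that $\chi_N$ is a multiple of a \emph{single} $\mathsf{S}_N$-irreducible rather than a sum of several. This should follow from the compatibility of $\mathrm{Irr}(\chi)$ with the partition $\mathrm{Irr}(\mathsf{S}_N)$: by Clifford theory each $\xi\in\mathrm{Irr}(\chi)$ restricts to $N$ as a sum of $G$-conjugate irreducibles of equal degree, these all lie in a single $\mathsf{S}_N$-class of irreducibles, and the hypothesis $\inner{\psi,\chi_N}>0$ together with the structure $\chi = \sigma_{\mathrm{Irr}(\chi)}$ pins down which one; assembling this carefully is the crux of the argument, while the inequality itself is a one-line $L^2$ estimate.
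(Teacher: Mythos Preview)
Your approach is essentially the paper's: both arguments compare $\inner{\chi_N,\chi_N}_N$ with $\norm{G:N}\,\inner{\chi,\chi}$, using homogeneity $\chi_N=\alpha\psi$ (where $\alpha=\chi(1)/\psi(1)$) to evaluate the left side, and observe that equality holds precisely when $\chi$ vanishes on $G\setminus N$. The paper's version is terse and writes $\alpha^2\psi(1)=\inner{\chi_N,\chi_N}$ without comment, so your flagging of homogeneity as the point needing justification is apt; it follows because $\chi_N$ lies in $\mathrm{scf}(\mathsf{S}_N)$ and has all its irreducible constituents in the single block $\mathrm{Irr}(\chi_N)$ that defines $\psi$.

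There is, however, a normalization slip in your displayed chain. In the paper's convention each $\chi\in\mathrm{Irr}(\mathsf{S})$ has the form $\sum_{\xi\in\mathrm{Irr}(\chi)}\xi(1)\xi$, so $\inner{\chi,\chi}=\chi(1)$, not $1$; likewise $\inner{\psi,\psi}_N=\psi(1)$. With $\chi_N=\alpha\psi$ one then gets $\inner{\chi_N,\chi_N}_N=\alpha^2\psi(1)$, not $\alpha=\chi(1)/\psi(1)$ as you wrote. The corrected chain reads
\[
\alpha^2\psi(1)=\inner{\chi_N,\chi_N}_N=\frac{1}{\norm{N}}\sum_{g\in N}\norm{\chi(g)}^2\le\frac{1}{\norm{N}}\sum_{g\in G}\norm{\chi(g)}^2=\norm{G:N}\,\chi(1)=\norm{G:N}\,\alpha\psi(1),
\]
whence $\alpha\le\norm{G:N}$, with equality iff $\chi$ vanishes off $N$. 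Once this bookkeeping is fixed, your argument and the paper's are the same.
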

\begin{proof}
Write $\alpha=\chi(1)/\psi(1)$. Then, we have 
\begin{align*}\alpha^2\psi(1)&=\inner*{\chi_N,\chi_N}=\frac{1}{\norm{N}}\sum_{g\in N}\norm{\chi(g)}^2\\
&\le\frac{1}{\norm{N}}\sum_{g\in G}\norm{\chi(g)}^2=\norm{G:N}\chi(1),
\end{align*}
with equality if and only if $\chi$ vanishes on $G\setminus N$. The result follows.
\end{proof} 

Note that if $\chi$ is linear, then ${\bf V}(\chi)=G$. For this reason, Lewis \cite{MLvos09} defines the vanishing-off subgroup of $G$ to be the subgroup
\[{\bf V}(G)=\inner{g\in G: \text{there exists}\ \, \chi\in\mathrm{Irr}(G\mid[G,G])\ \,\text{such that}\ \,\chi(g)\neq0}.\]
In \cite{MLvos09}, Lewis uses ${\bf V}(G)$ to prove several results about Camina pairs ---$(G,N)$ is a called a Camina pair if every class lying outside of $N$ is a union of $N$-cosets--- and {\it generalized} Camina pairs (Camina triples of the form $(G,N,[G,G])$). Extending these results, Mlaiki proves analogous results about Camina triples in \cite{NM14}, where he introduces the notation 
\[{\bf V}(G\mid N)=\inner{g\in G: \text{there exists}\ \, \chi\in\mathrm{Irr}(G\mid N)\ \,\text{such that}\ \,\chi(g)\neq0}.\]
With our analogs $[G,\mathsf{S}]$ and $\mathrm{Irr}(\mathsf{S}\mid N)$, we define supercharacter theoretic versions of these subgroups as follows. Let $\mathsf{S}$ be a supercharacter theory of $G$, and let $N$ be an $\mathsf{S}$-normal subgroup of $G$. Define \label{svanishN}
\[{\bf V}(\mathsf{S}\mid N)=\inner{g\in G:\text{there exists}\ \, \chi\in\mathrm{Irr}(\mathsf{S}\mid N)\ \,\text{such that}\ \,\chi(g)\neq0}\] and the $\mathsf{S}$-{\bf vanishing-off subgroup}, \label{svanish} ${\bf V}(\mathsf{S})$, by
\[{\bf V}(\mathsf{S})={\bf V}(\mathsf{S}\mid[G,\mathsf{S}]).\]
Note that when $\mathsf{S}=\mathsf{m}(G)$, ${\bf V}(\mathsf{S})={\bf V}(G)$, the vanishing-off subgroup as defined in \cite{MLvos09}. Lemma 3.2(7) of that paper shows that $\mathbf{V}(G)$ is the product of the vanishing-off subgroups of all nonlinear characters. Essentially the same proof verifies the definition of $\mathbf{V}(G\mid N)$ given in the introduction. 
In fact, the same idea generalizes the result for the supercharacter theoretic versions.

\begin{prop}\label{vanishingprod}Let $\mathsf{S}$ be a supercharacter theory of $G$, and let $N$ be $\mathsf{S}$-normal. Then ${\bf V}(\mathsf{S}\mid N)\lhd_{\mathsf{S}}G$ and 
\[{\bf V}(\mathsf{S}\mid N)=\prod_{\chi\in\mathrm{Irr}(\mathsf{S}\mid N)}{\bf V}(\chi),\]
where ${\bf V}(\chi)=\inner{g\in G:\chi(g)\neq0}$.
\end{prop}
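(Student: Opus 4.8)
The proposition makes two claims: the displayed product formula, which is essentially formal, and the $\mathsf{S}$-normality of ${\bf V}(\mathsf{S}\mid N)$, which uses the Schur-ring structure carried by $\mathsf{S}$. I would dispose of the product formula first. For $\chi\in\mathrm{Irr}(\mathsf{S})$ put $E_\chi=\{g\in G:\chi(g)\neq0\}$, so that ${\bf V}(\chi)=\inner{E_\chi}$. Since $\chi$ is a class function of $G$, each $E_\chi$ is a union of $G$-conjugacy classes and hence ${\bf V}(\chi)\lhd G$; consequently $\prod_{\chi\in\mathrm{Irr}(\mathsf{S}\mid N)}{\bf V}(\chi)$ really is a subgroup (the join of these normal subgroups) and it equals $\inner{\bigcup_{\chi\in\mathrm{Irr}(\mathsf{S}\mid N)}E_\chi}$. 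By definition that last group is exactly ${\bf V}(\mathsf{S}\mid N)$, which gives the equality.

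For $\mathsf{S}$-normality, observe that every element of $\mathrm{scf}(\mathsf{S})$, and in particular every $\chi\in\mathrm{Irr}(\mathsf{S})$, is constant on $\mathsf{S}$-classes, so each $E_\chi$ with $\chi\in\mathrm{Irr}(\mathsf{S}\mid N)$ is in fact a union of $\mathsf{S}$-classes, and therefore so is $D:=\bigcup_{\chi\in\mathrm{Irr}(\mathsf{S}\mid N)}E_\chi$. Thus ${\bf V}(\mathsf{S}\mid N)=\inner{D}$, and it is enough to prove the general fact that the subgroup generated by a union of $\mathsf{S}$-classes is $\mathsf{S}$-normal. To this end, let $X$ be a union of $\mathsf{S}$-classes and set $Y=X\cup X^{-1}\cup\{1\}$, which is again a union of $\mathsf{S}$-classes because the superclass partition is inversion-invariant and $\{1\}$ is an $\mathsf{S}$-class; then $\inner{X}=\bigcup_{n\ge0}Y^n$. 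Writing $\widehat{Y}=\sum_{y\in Y}y\in\mathbb{Z}[G]$, a sum of $\mathsf{S}$-class sums, and using that the $\mathbb{Z}$-span of the $\mathsf{S}$-class sums is a subring of $\mathbb{Z}[G]$, every power $\widehat{Y}^{\,n}$ lies in this subring and has non-negative integer coefficients; since distinct $\mathsf{S}$-class sums have disjoint supports, the support $Y^n$ of $\widehat{Y}^{\,n}$ is a union of $\mathsf{S}$-classes. Taking the union over $n$ shows $\inner{X}\lhd_{\mathsf{S}}G$, and applying this with $X=D$ yields ${\bf V}(\mathsf{S}\mid N)\lhd_{\mathsf{S}}G$.

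The only genuinely non-formal input is the closure statement invoked in the last step: the $\mathbb{Z}$-span of the superclass sums is a subring of $\mathbb{Z}[G]$ — equivalently, the $\mathsf{S}$-classes form a Schur ring — together with the remark that a non-negative element of that subring is supported on a union of superclasses. This is the standard dictionary between the convolution-algebra description of $\mathsf{S}$ and its associated S-ring, so it can be quoted rather than reproved; with it in hand the argument above is immediate. I expect the only other point worth making explicit to be the mild check that $Y$ remains a union of $\mathsf{S}$-classes, which rests on inversion-invariance of the superclass partition and on $\{1\}$ being an $\mathsf{S}$-class.
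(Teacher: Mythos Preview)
Your proof is correct and follows essentially the same line as the paper's. The paper disposes of the product formula just as you do, by observing that the generators of ${\bf V}(\mathsf{S}\mid N)$ land in the product of the ${\bf V}(\chi)$'s and that the reverse containment is clear; for $\mathsf{S}$-normality the paper simply asserts that each ${\bf V}(\chi)$, being generated by a union of $\mathsf{S}$-classes, is $\mathsf{S}$-normal, and then concludes by the fact that a product of $\mathsf{S}$-normal subgroups is $\mathsf{S}$-normal. Your argument is the same in spirit but more self-contained: you supply the Schur-ring justification for ``generated by a union of $\mathsf{S}$-classes implies $\mathsf{S}$-normal'' that the paper takes for granted, and you apply it directly to the full generating set $D$ rather than routing through the individual ${\bf V}(\chi)$'s and closure under products.
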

\begin{proof}
If $g\in G$ such that $\psi(g)\neq0$ for some $\psi\in\mathrm{Irr}(\mathsf{S}\mid N)$, then $g\in{\bf V}(\psi)\subseteq\prod_\chi{\bf V}(\chi)$. It follows that
\[\mathbf{V}(\mathsf{S}\mid N)\subseteq\prod_{\chi\in\mathrm{Irr}(\mathsf{S}\mid N)}{\bf V}(\chi).\]
As the reverse containment is clear, the second statement is proved. 

Since ${\bf V}(\chi)$ is generated by a union of $\mathsf{S}$-classes, it follows that ${\bf V}(\chi)\lhd_{\mathsf{S}}G$ for each $\chi\in\mathrm{Irr}(\mathsf{S}\mid N)$. Since ${\bf V}(\mathsf{S}\mid N)$ is the product of $\mathsf{S}$-normal subgroups, ${\bf V}(\mathsf{S}\mid N)$ is also $\mathsf{S}$-normal.
\end{proof}

The following results show that the function taking an $\mathsf{S}$-normal subgroup $N$ to $\mathbf{V}(\mathsf{S}\mid N)$ satisfies some desirable properties. In particular, it is order-preserving and respects products.
\begin{lem}\label{mono}
Let $\mathsf{S}$ be a supercharacter theory of $G$, and let $H$ and $N$ be $\mathsf{S}$-normal subgroups. If $H\le N$, then ${\bf V}(\mathsf{S}\mid H)\le{\bf V}(\mathsf{S}\mid N)$.
\end{lem}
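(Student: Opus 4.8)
The plan is to deduce this directly from the product description of $\mathbf{V}(\mathsf{S}\mid N)$ established in Proposition~\ref{vanishingprod}. The key observation is a monotonicity of the indexing set: if $\chi\in\mathrm{Irr}(\mathsf{S})$ does not contain $H$ in its kernel and $H\le N$, then $\chi$ cannot contain $N$ in its kernel either, since $N\le\ker(\chi)$ would force $H\le N\le\ker(\chi)$. Hence $H\le N$ gives $\mathrm{Irr}(\mathsf{S}\mid H)\subseteq\mathrm{Irr}(\mathsf{S}\mid N)$.

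First I would record this set inclusion. Then, applying Proposition~\ref{vanishingprod} twice, $\mathbf{V}(\mathsf{S}\mid H)=\prod_{\chi\in\mathrm{Irr}(\mathsf{S}\mid H)}\mathbf{V}(\chi)$ is a product over a subcollection of the factors appearing in $\mathbf{V}(\mathsf{S}\mid N)=\prod_{\chi\in\mathrm{Irr}(\mathsf{S}\mid N)}\mathbf{V}(\chi)$; since a product of a subcollection of subgroups is contained in the product of the whole collection, this yields $\mathbf{V}(\mathsf{S}\mid H)\le\mathbf{V}(\mathsf{S}\mid N)$. Alternatively, one can bypass Proposition~\ref{vanishingprod} entirely: each generator $g$ of $\mathbf{V}(\mathsf{S}\mid H)$ satisfies $\chi(g)\neq0$ for some $\chi\in\mathrm{Irr}(\mathsf{S}\mid H)\subseteq\mathrm{Irr}(\mathsf{S}\mid N)$, so $g\in\mathbf{V}(\mathsf{S}\mid N)$, and a subgroup generated by a subset of $\mathbf{V}(\mathsf{S}\mid N)$ is contained in it.

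I do not expect a genuine obstacle here: the statement is an immediate consequence of the definitions together with Proposition~\ref{vanishingprod}, and the only point needing (trivial) care is the contrapositive that $N\le\ker(\chi)$ forces $H\le\ker(\chi)$. The argument should take two or three lines.
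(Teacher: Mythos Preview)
Your proposal is correct and matches the paper's proof essentially verbatim: the paper likewise observes $\mathrm{Irr}(\mathsf{S}\mid H)\subseteq\mathrm{Irr}(\mathsf{S}\mid N)$ and then invokes Proposition~\ref{vanishingprod}. Your alternative argument via generators is also fine but not used in the paper.
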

\begin{proof}
This follows from Proposition~\ref{vanishingprod} and the observation that $\mathrm{Irr}(\mathsf{S}\mid H)\subseteq\mathrm{Irr}(\mathsf{S}\mid N)$ whenever $H\le N$.
\end{proof}

\begin{lem}\label{vanprod}
Let $\mathsf{S}$ be a supercharacter theory of $G$, and let $N_1$ and $N_2$ be $\mathsf{S}$-normal. Then ${\bf V}(\mathsf{S}\mid N_1N_2)={\bf V}(\mathsf{S}\mid N_1)\mspace{2mu}{\bf V}(\mathsf{S}\mid N_2)$.
\end{lem}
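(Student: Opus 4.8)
The plan is to prove the two inclusions separately. First I would establish ${\bf V}(\mathsf{S}\mid N_1)\,{\bf V}(\mathsf{S}\mid N_2)\le{\bf V}(\mathsf{S}\mid N_1N_2)$: since $N_1\le N_1N_2$ and $N_2\le N_1N_2$, Lemma~\ref{mono} gives ${\bf V}(\mathsf{S}\mid N_i)\le{\bf V}(\mathsf{S}\mid N_1N_2)$ for $i=1,2$, and since all the subgroups in sight are $\mathsf{S}$-normal by Proposition~\ref{vanishingprod} --- hence normal in $G$, being unions of conjugacy classes --- the product ${\bf V}(\mathsf{S}\mid N_1)\,{\bf V}(\mathsf{S}\mid N_2)$ is a subgroup of $G$ contained in ${\bf V}(\mathsf{S}\mid N_1N_2)$.

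For the reverse inclusion I would use the product formula of Proposition~\ref{vanishingprod} together with the set identity $\mathrm{Irr}(\mathsf{S}\mid N_1N_2)=\mathrm{Irr}(\mathsf{S}\mid N_1)\cup\mathrm{Irr}(\mathsf{S}\mid N_2)$. To see this identity, observe that for $\chi\in\mathrm{Irr}(\mathsf{S})$ the kernel $\ker(\chi)$ is a subgroup, so $N_1N_2\subseteq\ker(\chi)$ holds exactly when both $N_1\subseteq\ker(\chi)$ and $N_2\subseteq\ker(\chi)$; negating, $\chi\in\mathrm{Irr}(\mathsf{S}\mid N_1N_2)$ if and only if $\chi\in\mathrm{Irr}(\mathsf{S}\mid N_1)$ or $\chi\in\mathrm{Irr}(\mathsf{S}\mid N_2)$. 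Now Proposition~\ref{vanishingprod} gives ${\bf V}(\mathsf{S}\mid N_1N_2)=\prod_{\chi\in\mathrm{Irr}(\mathsf{S}\mid N_1N_2)}{\bf V}(\chi)$, and by the identity just proved each factor ${\bf V}(\chi)$ lies in ${\bf V}(\mathsf{S}\mid N_1)$ or in ${\bf V}(\mathsf{S}\mid N_2)$, hence in ${\bf V}(\mathsf{S}\mid N_1)\,{\bf V}(\mathsf{S}\mid N_2)$; so the whole product is contained in ${\bf V}(\mathsf{S}\mid N_1)\,{\bf V}(\mathsf{S}\mid N_2)$. Combining the two inclusions yields the claimed equality.

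There is essentially no obstacle here: once Proposition~\ref{vanishingprod} and Lemma~\ref{mono} are available, the argument is a short bookkeeping exercise. The only point deserving a moment's attention is the set identity $\mathrm{Irr}(\mathsf{S}\mid N_1N_2)=\mathrm{Irr}(\mathsf{S}\mid N_1)\cup\mathrm{Irr}(\mathsf{S}\mid N_2)$, which relies on $\ker(\chi)$ being a subgroup, together with the standard fact that a product of finitely many normal subgroups is again a subgroup --- it is this fact that lets one split $\prod_\chi{\bf V}(\chi)$ over a union of index sets into the product of the two corresponding sub-products.
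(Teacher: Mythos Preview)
Your proof is correct and uses the same key ingredients as the paper: the set identity $\mathrm{Irr}(\mathsf{S}\mid N_1N_2)=\mathrm{Irr}(\mathsf{S}\mid N_1)\cup\mathrm{Irr}(\mathsf{S}\mid N_2)$ together with the product formula of Proposition~\ref{vanishingprod}. The only difference is organizational: you prove the two inclusions separately, invoking Lemma~\ref{mono} for one direction, whereas the paper observes that once the set identity is established, the product formula immediately yields \emph{equality} in one line, since a product of normal subgroups indexed by a union splits as the product of the two sub-products; your detour through Lemma~\ref{mono} is therefore unnecessary but not wrong.
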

\begin{proof}
First note that $\mathrm{Irr}(\mathsf{S}\mid N_i)\subset\mathrm{Irr}(\mathsf{S}\mid N_1N_2)$ for $i=1,2$. If $\chi\notin\mathrm{Irr}(\mathsf{S}\mid N_1)\cup\mathrm{Irr}(\mathsf{S}\mid N_2)$, then $N_1, N_2\le\ker(\chi)$, which means that $\chi\notin\mathrm{Irr}(\mathsf{S}\mid N_1N_2)$. Hence 
\[\mathrm{Irr}(\mathsf{S}\mid N_1N_2)= \mathrm{Irr}(\mathsf{S}\mid N_1)\cup\mathrm{Irr}(\mathsf{S}\mid N_2).\]
By Proposition~\ref{vanishingprod}, we have
\begin{align*}{\bf V}(\mathsf{S}\mid N_1N_2)&=\prod_{\chi\in\mathrm{Irr}(\mathsf{S}\mid N_1N_2)}{\bf V}(\chi)=\prod_{\chi\in\mathrm{Irr}(\mathsf{S}\mid N_1)\cup\mathrm{Irr}(\mathsf{S}\mid N_2)}{\bf V}(\chi)\\[3ex]
&=\left(\prod_{\chi\in\mathrm{Irr}(\mathsf{S}\mid N_1)}{\bf V}(\chi)\right)\left(\prod_{\chi\in\mathrm{Irr}(\mathsf{S}\mid N_2)}{\bf V}(\chi)\right)={\bf V}(\mathsf{S}\mid N_1)\mspace{2mu}{\bf V}(\mathsf{S}\mid N_2).
\end{align*}
\end{proof}

The vanishing off subgroup also behaves predictably with quotients.
\begin{lem}Let $\mathsf{S}$ be a supercharacter theory of $G$, let $N$ and $H$ be $\mathsf{S}$-normal, and assume that $N\le H$. Then
\[{\bf V}(\mathsf{S}_{G/N}\mid H/N)\,{\bf V}(\mathsf{S}\mid N)/N={\bf V}(\mathsf{S}\mid H)/N.\]
\end{lem}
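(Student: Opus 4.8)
The plan is to reduce everything to the description of $\mathbf{V}(\mathsf{S}\mid N)$ as a product of vanishing-off subgroups of $\mathsf{S}$-irreducible characters, given by Proposition~\ref{vanishingprod}, and then match up the $\mathsf{S}$-irreducible characters of $G$ not containing $H$ in their kernel with the $\mathsf{S}_{G/N}$-irreducible characters of $G/N$, using the construction of $\mathsf{S}_{G/N}$ recalled in Section~2. The key point is the bijection $\mathrm{Irr}(\mathsf{S}_{G/N}\mid H/N)\leftrightarrow\{\chi\in\mathrm{Irr}(\mathsf{S}):N\subseteq\ker(\chi),\ H\nsubseteq\ker(\chi)\}$, under which a character $\bar\chi$ of $G/N$ corresponds to the character $\chi$ of $G$ obtained by inflation. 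Write $\pi:G\to G/N$ for the projection.

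First I would record the vanishing-off subgroup identity under inflation: if $\chi\in\mathrm{Irr}(\mathsf{S})$ has $N\subseteq\ker(\chi)$ and $\bar\chi$ is the corresponding character of $G/N$, then $\chi(g)\neq0$ if and only if $\bar\chi(gN)\neq0$, so $\mathbf{V}(\bar\chi)=\mathbf{V}(\chi)/N$ and, in particular, $\mathbf{V}(\chi)\supseteq N$. Taking the product over all such $\chi$ and applying Proposition~\ref{vanishingprod} (to $\mathsf{S}_{G/N}$ and the $\mathsf{S}_{G/N}$-normal subgroup $H/N$) gives
\[\mathbf{V}(\mathsf{S}_{G/N}\mid H/N)=\prod_{\substack{\chi\in\mathrm{Irr}(\mathsf{S})\\ N\subseteq\ker(\chi),\ H\nsubseteq\ker(\chi)}}\mathbf{V}(\chi)/N.\]
Next I would split $\mathrm{Irr}(\mathsf{S}\mid H)$ according to whether $N$ is contained in the kernel. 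Since $N\le H$, every $\chi\in\mathrm{Irr}(\mathsf{S}\mid N)$ also lies in $\mathrm{Irr}(\mathsf{S}\mid H)$, and conversely $\chi\in\mathrm{Irr}(\mathsf{S}\mid H)$ either has $N\nsubseteq\ker(\chi)$, i.e. $\chi\in\mathrm{Irr}(\mathsf{S}\mid N)$, or has $N\subseteq\ker(\chi)$ but $H\nsubseteq\ker(\chi)$. Thus $\mathrm{Irr}(\mathsf{S}\mid H)$ is the disjoint union of $\mathrm{Irr}(\mathsf{S}\mid N)$ and $\{\chi\in\mathrm{Irr}(\mathsf{S}):N\subseteq\ker(\chi),\ H\nsubseteq\ker(\chi)\}$. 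Applying Proposition~\ref{vanishingprod} to $\mathbf{V}(\mathsf{S}\mid H)$ and factoring the product along this partition yields
\[\mathbf{V}(\mathsf{S}\mid H)=\mathbf{V}(\mathsf{S}\mid N)\prod_{\substack{\chi\in\mathrm{Irr}(\mathsf{S})\\ N\subseteq\ker(\chi),\ H\nsubseteq\ker(\chi)}}\mathbf{V}(\chi).\]
Dividing through by $N$ (which is legitimate since $N\le\mathbf{V}(\mathsf{S}\mid N)\le\mathbf{V}(\mathsf{S}\mid H)$ by Proposition~\ref{vanishingprod} and Lemma~\ref{mono}, so all the subgroups involved contain $N$) and comparing with the displayed formula for $\mathbf{V}(\mathsf{S}_{G/N}\mid H/N)$ gives the claimed identity.

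The main obstacle is bookkeeping rather than a genuine difficulty: one has to be careful that the bijection between $\mathrm{Irr}(\mathsf{S}_{G/N})$ and $\mathrm{Irr}(\mathsf{S}/N)$ really does send $\mathrm{Irr}(\mathsf{S}_{G/N}\mid H/N)$ onto the set of $\chi\in\mathrm{Irr}(\mathsf{S}/N)$ with $H\nsubseteq\ker(\chi)$ — this is just the observation that $H/N\subseteq\ker(\bar\chi)$ iff $H\subseteq\ker(\chi)$ — and that the vanishing-off subgroup genuinely transports correctly along inflation, which follows from $\bar\chi(gN)=\chi(g)$ and the fact that $N=\ker(\pi)\subseteq\mathbf{V}(\chi)$ for $\chi$ trivial on $N$. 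Once these identifications are in place, the proof is a two-line manipulation of products of $\mathsf{S}$-normal subgroups, all of which contain $N$, so passing to the quotient by $N$ commutes with taking products.
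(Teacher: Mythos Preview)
Your proof is correct and follows essentially the same route as the paper's: split $\mathrm{Irr}(\mathsf{S}\mid H)$ as $\mathrm{Irr}(\mathsf{S}\mid N)\cup\bigl(\mathrm{Irr}(\mathsf{S}\mid H)\cap\mathrm{Irr}(\mathsf{S}/N)\bigr)$, apply Proposition~\ref{vanishingprod} to factor $\mathbf{V}(\mathsf{S}\mid H)$ accordingly, use $\mathbf{V}(\chi)/N=\mathbf{V}(\tilde\chi)$ for $\chi$ trivial on $N$, and reduce modulo $N$. One small correction: the containment $N\le\mathbf{V}(\mathsf{S}\mid N)$ you invoke is the content of Lemma~\ref{vanishingcontainment}, not a consequence of Proposition~\ref{vanishingprod} or Lemma~\ref{mono}.
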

\begin{proof}
Since $N\le H$, we have $\mathrm{Irr}(\mathsf{S}\mid N)\subseteq\mathrm{Irr}(\mathsf{S}\mid H)$. In fact, 
\[\mathrm{Irr}(\mathsf{S}\mid H)=\mathrm{Irr}(\mathsf{S}\mid N)\cup\bigl(\mathrm{Irr}(\mathsf{S}\mid H)\cap\mathrm{Irr}(\mathsf{S}/N)\bigr).\]
Therefore
\[
{\bf V}(\mathsf{S}\mid H)={\bf V}(\mathsf{S}\mid N)\cdot\prod_{\substack{\chi\in\mathrm{Irr}(\mathsf{S}/N)\\H\nsubseteq\ker(\chi)}}{\bf V}(\chi).\]
Note that $N\le\mathbf{V}(\mathsf{S}\mid H)$ since $N\le H$. Also note that if $N\le\ker(\chi)$, and $\tilde{\chi}$ denotes the deflated character of $G/N$, then $\mathbf{V}(\chi)/N=\mathbf{V}(\tilde{\chi})$. Therefore the result follows by reducing modulo $N$.
\end{proof}


\section{$\Delta$-products}
Let $\mathsf{S}$ be a supercharacter theory of $G$, and assume that $M$ and $N$ are $\mathsf{S}$-normal subgroups of $G$ satisfying $M\le N$. Then $\mathsf{S}$ is a nontrivial $\Delta$-product over $M$ and $N$ if and only if every $\mathsf{S}$-class lying outside of $N$ is a union of $M$-cosets \cite[Proposition 7.3]{AH12}. In this section, we determine several other equivalent characterizations of $\Delta$-products. In particular, we prove Theorem~\ref{introdelta} with some other characterizations. Before doing so, however, we will need the following generalization of column orthogonality, as well as a useful consequence.

\begin{lem}[{\normalfont\cite[Theorem 3.3]{SB18nil}}]\label{column}Let $\mathsf{S}$ be a supercharacter theory of $G$. For each $g,h\in G$, we have
\[{}\sum_{\chi\in\mathrm{Irr}(\mathsf{S})}\frac{\chi(g)\overline{\chi(h)}}{\chi(1)}=\frac{\norm{G}}{\norm{\mathrm{cl}_{\mathsf{S}}(g)}}\]
if $h\in\mathrm{cl}_{\mathsf{S}}(g)$, and is 0 otherwise.
\end{lem}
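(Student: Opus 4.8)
The plan is to deduce this relation from ordinary column orthogonality of $G$ by the same short linear-algebra manipulation that proves the classical statement. The first step will be to record the dual ``row orthogonality'' statement: for $\chi,\psi\in\mathrm{Irr}(\mathsf{S})$ one has $\inner{\chi,\psi}=\chi(1)$ if $\chi=\psi$ and $\inner{\chi,\psi}=0$ otherwise. This is immediate from the defining expansion $\chi=\sum_{\xi\in\mathrm{Irr}(\chi)}\xi(1)\xi$: since the sets $\mathrm{Irr}(\chi)$, as $\chi$ runs over $\mathrm{Irr}(\mathsf{S})$, partition $\mathrm{Irr}(G)$, distinct $\chi,\psi$ have $\mathrm{Irr}(\chi)\cap\mathrm{Irr}(\psi)=\varnothing$, so by orthonormality of $\mathrm{Irr}(G)$ we get $\inner{\chi,\psi}=\sum_{\xi\in\mathrm{Irr}(\chi)}\sum_{\eta\in\mathrm{Irr}(\psi)}\xi(1)\eta(1)\inner{\xi,\eta}=0$; while $\inner{\chi,\chi}=\sum_{\xi\in\mathrm{Irr}(\chi)}\xi(1)^2=\chi(1)$, the last equality being the expansion of $\chi$ evaluated at $1$.

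Next I would encode this as a matrix identity. Write $\mathrm{Irr}(\mathsf{S})=\{\chi_1,\dots,\chi_r\}$ and choose representatives $g_1,\dots,g_r$ of the $\mathsf{S}$-classes $K_1,\dots,K_r$; here $r=\norm{\mathrm{Irr}(\mathsf{S})}=\norm{\mathrm{Cl}(\mathsf{S})}$ is the common dimension, since the span of $\mathrm{Irr}(\mathsf{S})$ has a basis of idempotents indexed by $\mathrm{Irr}(\mathsf{S})$ and one indexed by $\mathrm{Cl}(\mathsf{S})$. Let $X$ be the $r\times r$ matrix with $X_{ij}=\chi_i(g_j)$, let $D=\mathrm{diag}(\chi_1(1),\dots,\chi_r(1))$, and let $E=\mathrm{diag}(\norm{K_1}/\norm{G},\dots,\norm{K_r}/\norm{G})$. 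Grouping the sum defining $\inner{\chi_i,\chi_k}$ according to $\mathsf{S}$-classes, row orthogonality says precisely that $XEX^{\ast}=D$, where $X^{\ast}$ is the conjugate transpose. Since $D$ and $E$ are invertible, so is $X$, and left-multiplying by $X^{-1}$, then by $E^{-1}$, then right-multiplying by $D^{-1}X$, one obtains $X^{\ast}D^{-1}X=E^{-1}$. Reading off the $(j,k)$ entry gives $\sum_{i=1}^{r}\overline{\chi_i(g_j)}\,\chi_i(g_k)/\chi_i(1)=\norm{G}/\norm{K_j}$ when $j=k$ and $0$ otherwise; as the right-hand side and each $\chi_i(1)$ are real, conjugating yields the asserted identity for these representatives.

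Finally, for arbitrary $g,h\in G$, the $\mathsf{S}$-characters are constant on $\mathsf{S}$-classes, so if $g\in K_j$ and $h\in K_k$ then $\chi_i(g)=\chi_i(g_j)$ and $\chi_i(h)=\chi_i(g_k)$ for every $i$; the case $h\in\mathrm{cl}_{\mathsf{S}}(g)$ is $j=k$ and gives $\norm{G}/\norm{\mathrm{cl}_{\mathsf{S}}(g)}$, while $h\notin\mathrm{cl}_{\mathsf{S}}(g)$ is $j\ne k$ and gives $0$. I expect the only mildly delicate point to be the bookkeeping that turns row orthogonality into the clean equation $XEX^{\ast}=D$ and then extracts the column relations from $X^{\ast}D^{-1}X=E^{-1}$; everything else is formal. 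One could instead argue directly with the primitive idempotents of the span of $\mathrm{Irr}(\mathsf{S})$ under the convolution and pointwise products, but the matrix computation above is the most economical route.
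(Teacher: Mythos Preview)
Your argument is correct. The paper does not actually prove this lemma; it is quoted without proof from \cite[Theorem 3.3]{SB18nil}, so there is no in-paper argument to compare against. Your derivation---establishing the row relation $\inner{\chi,\psi}=\chi(1)\delta_{\chi,\psi}$ from the partition of $\mathrm{Irr}(G)$ and the defining expansion $\chi=\sum_{\xi\in\mathrm{Irr}(\chi)}\xi(1)\xi$, encoding it as $XEX^{\ast}=D$, and inverting to obtain $X^{\ast}D^{-1}X=E^{-1}$---is exactly the standard passage from row to column orthogonality, adapted to the supercharacter setting, and every step checks out.
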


\begin{lem}\label{vanishingcontainment}Let $\mathsf{S}$ be a supercharacter theory of $G$, and let $N$ be $\mathsf{S}$-normal. Then $N$ is contained in ${\bf V}(\mathsf{S}\mid N)$.
\end{lem}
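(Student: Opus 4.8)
The plan is to show directly that every element of $N$ lies in $\mathbf{V}(\mathsf{S}\mid N)$, using the column orthogonality relation of Lemma~\ref{column}. The identity element lies in $\mathbf{V}(\mathsf{S}\mid N)$ trivially, so it suffices to show that any $g\in N$ with $g\neq1$ satisfies $\chi(g)\neq0$ for some $\chi\in\mathrm{Irr}(\mathsf{S}\mid N)$; such a $g$ is then one of the generators of $\mathbf{V}(\mathsf{S}\mid N)$.

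Suppose, for contradiction, that $g\in N$, $g\neq1$, and $\chi(g)=0$ for every $\chi\in\mathrm{Irr}(\mathsf{S}\mid N)$. Since $\{1\}$ is an $\mathsf{S}$-class, $1\notin\mathrm{cl}_{\mathsf{S}}(g)$, so applying Lemma~\ref{column} to the pair $(g,1)$, and using that $\chi(1)$ is a positive real so $\overline{\chi(1)}/\chi(1)=1$, gives $\sum_{\chi\in\mathrm{Irr}(\mathsf{S})}\chi(g)=0$. Splitting $\mathrm{Irr}(\mathsf{S})$ into the disjoint union $\mathrm{Irr}(\mathsf{S}/N)\cup\mathrm{Irr}(\mathsf{S}\mid N)$ and discarding the second sum by hypothesis, we are left with $\sum_{\chi\in\mathrm{Irr}(\mathsf{S}/N)}\chi(g)=0$.

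Now for each $\chi\in\mathrm{Irr}(\mathsf{S}/N)$ we have $N\le\ker(\chi)$, so $\chi(g)=\chi(1)$ because $g\in N$. Hence $\sum_{\chi\in\mathrm{Irr}(\mathsf{S}/N)}\chi(1)=0$. This is impossible: it is a sum over the nonempty set $\mathrm{Irr}(\mathsf{S}/N)$ of positive character degrees (indeed, identifying these characters with $\mathrm{Irr}(\mathsf{S}_{G/N})$, the sum equals $\norm{G:N}\ge1$). This contradiction shows that some $\chi\in\mathrm{Irr}(\mathsf{S}\mid N)$ has $\chi(g)\neq0$, so $g\in\mathbf{V}(\mathsf{S}\mid N)$, and therefore $N\subseteq\mathbf{V}(\mathsf{S}\mid N)$.

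I do not expect a serious obstacle here; the only point needing a little care is the invocation that $\{1\}$ is always an $\mathsf{S}$-class (a standard feature of any supercharacter theory), which is what lets us use the off-diagonal form of column orthogonality, together with handling the trivial case $g=1$ separately.
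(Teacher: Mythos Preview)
Your proof is correct and follows essentially the same route as the paper: both argue by contradiction via the column orthogonality of Lemma~\ref{column}, splitting $\mathrm{Irr}(\mathsf{S})$ into $\mathrm{Irr}(\mathsf{S}/N)$ and $\mathrm{Irr}(\mathsf{S}\mid N)$ and using that $\chi(g)=\chi(1)$ for $\chi\in\mathrm{Irr}(\mathsf{S}/N)$. The only difference is that the paper applies the \emph{diagonal} case $h=g$ of Lemma~\ref{column} to obtain $\norm{\mathrm{cl}_{\mathsf{S}}(g)}=\norm{N}$ (impossible, since $\{1\}$ is an $\mathsf{S}$-class contained in $N$), whereas you use the off-diagonal case $h=1$ to reach the contradiction $\sum_{\chi\in\mathrm{Irr}(\mathsf{S}/N)}\chi(1)=0$ more directly.
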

\begin{proof}
Suppose that $N$ is not contained in ${\bf V}(\mathsf{S}\mid N)$, and let $g\in N\setminus {\bf V}(\mathsf{S}\mid N)$. Then $\chi(g)=0$ for all $\chi\in\mathrm{Irr}(\mathsf{S}\mid N)$ and is in the kernel of every other $\mathsf{S}$-irreducible character. Therefore, Lemma~\ref{column} gives
\begin{align*}\frac{\norm{G}}{\norm{\mathrm{cl}_{\mathsf{S}}(g)}}&={}\sum_{\chi\in\mathrm{Irr}(\mathsf{S})}\frac{\norm{\chi(g)}^2}{\chi(1)}
={}\sum_{\chi\in\mathrm{Irr}(\mathsf{S}/N)}\frac{\norm{\chi(g)}^2}{\chi(1)}+{}\sum_{\chi\in\mathrm{Irr}(\mathsf{S}\mid N)}\frac{\norm{\chi(g)}^2}{\chi(1)}\\
&={}\sum_{\chi\in\mathrm{Irr}(\mathsf{S}/N)}\frac{\chi(1)^2}{\chi(1)}=\sum_{\chi\in\mathrm{Irr}(\mathsf{S}/N)}\chi(1)=\sum_{\xi\in\mathrm{Irr}(G/N)}\xi(1)^2=\frac{\norm{G}}{\norm{N}}.
\end{align*}
This implies that $\mathrm{cl}_{\mathsf{S}}(g)=N$, which is a contradiction since $g\in N$. The result follows.
\end{proof}

We now prove Theorem~\ref{introdelta}.

\begin{thm}\label{delta}
Let $\mathsf{S}$ be a supercharacter theory of $G$, and let $M\le N$ be $\mathsf{S}$-normal subgroups. The following are equivalent:
\begin{enumerate}[label={\bf(\arabic*)}]\openup5pt
\item$\mathsf{S}$ is a $\Delta$-product over $M$ and $N$;
\item for each $g\in G\setminus N$, $\norm{\mathrm{cl}_{\mathsf{S}}(g)}=\norm{\mathrm{cl}_{\mathsf{S}_{G/M}}(gM)}\norm{M}$;
\item for each $g\in G\setminus N$, $\chi(g)=0$ for each $\chi\in\mathrm{Irr}(\mathsf{S}\mid M)$;
\item ${\bf V}(\mathsf{S}\mid M)\le N$;
\item for each $g\in G\setminus N$, and $m\in M$, there exists $k\in \mathrm{cl}_{\mathsf{S}}(g)$ such that $g^{-1}k=m$.
\end{enumerate}
\end{thm}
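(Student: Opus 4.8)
The plan is to prove the chain of equivalences by splitting the five conditions into the ``class-structure'' group $(1),(5),(2)$ and the ``character-vanishing'' group $(3),(4)$, bridging them at $(2)$; concretely I would establish $(1)\Leftrightarrow(5)$, $(5)\Leftrightarrow(2)$, $(2)\Leftrightarrow(3)$, and $(3)\Leftrightarrow(4)$. Two of these are bookkeeping. For $(1)\Leftrightarrow(5)$: since $N$ is $\mathsf{S}$-normal, each $\mathsf{S}$-class lies wholly inside $N$ or wholly inside $G\setminus N$, and for a class $C\subseteq G\setminus N$ the assertion that $C$ is a union of $M$-cosets means exactly that $km\in\mathrm{cl}_{\mathsf{S}}(k)$ for all $k\in C$, $m\in M$; hence $(5)$ is a verbatim restatement of ``every $\mathsf{S}$-class lying outside $N$ is a union of $M$-cosets,'' which is equivalent to $(1)$ by \cite[Proposition 7.3]{AH12}. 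For $(3)\Leftrightarrow(4)$: by Proposition~\ref{vanishingprod}, ${\bf V}(\mathsf{S}\mid M)=\prod_{\chi\in\mathrm{Irr}(\mathsf{S}\mid M)}{\bf V}(\chi)$ with each ${\bf V}(\chi)=\inner{g\in G:\chi(g)\neq0}$ the smallest subgroup off which $\chi$ vanishes; since $N$ is a subgroup containing a given factor ${\bf V}(\chi)$ precisely when $\chi$ vanishes on $G\setminus N$, we get ${\bf V}(\mathsf{S}\mid M)\le N$ iff every $\chi\in\mathrm{Irr}(\mathsf{S}\mid M)$ vanishes on $G\setminus N$, which is $(3)$.

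For $(5)\Leftrightarrow(2)$, fix $g\in G\setminus N$ and let $\pi\colon G\to G/M$. Because the $\mathsf{S}_{G/M}$-classes are the $\pi$-images of the $\mathsf{S}$-classes, $\pi$ restricts to a surjection $\mathrm{cl}_{\mathsf{S}}(g)\to\mathrm{cl}_{\mathsf{S}_{G/M}}(gM)$ whose fibre over a coset $kM$ is $\mathrm{cl}_{\mathsf{S}}(g)\cap kM$, of size at most $\norm{M}$. If $(5)$ holds then every element of $\mathrm{cl}_{\mathsf{S}}(g)$ lies outside $N$, so applying $(5)$ to each such element shows every fibre is a full coset, whence $\norm{\mathrm{cl}_{\mathsf{S}}(g)}=\norm{M}\,\norm{\mathrm{cl}_{\mathsf{S}_{G/M}}(gM)}$; conversely the equality in $(2)$ together with the fibre bound forces every fibre to have size exactly $\norm{M}$, so in particular $gM\subseteq\mathrm{cl}_{\mathsf{S}}(g)$, which is $(5)$.

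The heart of the argument is $(2)\Leftrightarrow(3)$, obtained from two applications of Lemma~\ref{column}. Applying it to $\mathsf{S}$ at $(g,g)$ gives $\sum_{\chi\in\mathrm{Irr}(\mathsf{S})}\norm{\chi(g)}^2/\chi(1)=\norm{G}/\norm{\mathrm{cl}_{\mathsf{S}}(g)}$, while applying it to $\mathsf{S}_{G/M}$ at $(gM,gM)$ and identifying $\mathrm{Irr}(\mathsf{S}_{G/M})$ with $\mathrm{Irr}(\mathsf{S}/M)$ (a character with $M$ in its kernel having the same value at $g$ and at $gM$) gives $\sum_{\chi\in\mathrm{Irr}(\mathsf{S}/M)}\norm{\chi(g)}^2/\chi(1)=\norm{G}/(\norm{M}\,\norm{\mathrm{cl}_{\mathsf{S}_{G/M}}(gM)})$. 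Subtracting, and using that $\mathrm{Irr}(\mathsf{S})$ is the disjoint union of $\mathrm{Irr}(\mathsf{S}/M)$ and $\mathrm{Irr}(\mathsf{S}\mid M)$, yields
\[\sum_{\chi\in\mathrm{Irr}(\mathsf{S}\mid M)}\frac{\norm{\chi(g)}^2}{\chi(1)}=\frac{\norm{G}}{\norm{\mathrm{cl}_{\mathsf{S}}(g)}}-\frac{\norm{G}}{\norm{M}\,\norm{\mathrm{cl}_{\mathsf{S}_{G/M}}(gM)}}.\]
For a fixed $g\in G\setminus N$ the left side, a sum of nonnegative terms, vanishes exactly when $\chi(g)=0$ for every $\chi\in\mathrm{Irr}(\mathsf{S}\mid M)$, while the right side vanishes exactly when $\norm{\mathrm{cl}_{\mathsf{S}}(g)}=\norm{M}\,\norm{\mathrm{cl}_{\mathsf{S}_{G/M}}(gM)}$; letting $g$ range over $G\setminus N$ then gives $(2)\Leftrightarrow(3)$ and would complete the argument.

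I expect the main obstacle to be precisely this $(2)\Leftrightarrow(3)$ step: one must handle the passage to the quotient theory $\mathsf{S}_{G/M}$ carefully — matching $\mathrm{Irr}(\mathsf{S}_{G/M})$ with $\mathrm{Irr}(\mathsf{S}/M)$, relating $\norm{\mathrm{cl}_{\mathsf{S}_{G/M}}(gM)}$ to data in $G$, and keeping the normalizations straight — so that the two orthogonality relations subtract to leave exactly the sum over $\mathrm{Irr}(\mathsf{S}\mid M)$. Once that identity is in hand, the remaining equivalences are short, resting only on \cite[Proposition 7.3]{AH12}, elementary coset counting, and Proposition~\ref{vanishingprod}.
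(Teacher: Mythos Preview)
Your proposal is correct and rests on the same central computation as the paper: apply the column-orthogonality relation (Lemma~\ref{column}) to $\mathsf{S}$ and to $\mathsf{S}_{G/M}$ and subtract to obtain
\[\sum_{\chi\in\mathrm{Irr}(\mathsf{S}\mid M)}\frac{\norm{\chi(g)}^2}{\chi(1)}=\frac{\norm{G}}{\norm{\mathrm{cl}_{\mathsf{S}}(g)}}-\frac{\norm{G}}{\norm{M}\,\norm{\mathrm{cl}_{\mathsf{S}_{G/M}}(gM)}}.\]
The only real difference is how the cycle of implications is closed. The paper runs $(1)\Rightarrow(2)\Rightarrow(3)\Leftrightarrow(4)$ and $(5)\Leftrightarrow(1)$, then proves $(3)\Rightarrow(5)$ directly: if $(3)$ holds, every supercharacter is constant on each $M$-coset lying outside $N$ (those in $\mathrm{Irr}(\mathsf{S}\mid M)$ because they vanish there, those in $\mathrm{Irr}(\mathsf{S}/M)$ because $M$ is in their kernel), and since supercharacter values separate superclasses, each superclass outside $N$ is a union of $M$-cosets. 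You instead read the reverse implication $(3)\Rightarrow(2)$ off the same identity for free, and close the loop with your fibre-counting argument for $(2)\Leftrightarrow(5)$. Both closures are short; yours is slightly more economical in that the identity does double duty, while the paper's $(3)\Rightarrow(5)$ step makes explicit the useful principle that superclasses are the common level sets of the supercharacters.
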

\begin{proof}
First, we show that {\bf(1)} implies {\bf(2)}. If $\mathsf{S}$ is a $\Delta$-product over $M$ and $N$, then every class lying outside of $N$ is a union of $M$-cosets by \cite[Proposition 4.3]{AH08}, which implies {\bf(2)}. 

Next, we show {\bf(2)} implies {\bf(3)}. By Lemma~\ref{column}, we have
\begin{align*}\frac{\norm{G}}{\norm{\mathrm{cl}_{\mathsf{S}}(g)}}&={}\sum_{\chi\in\mathrm{Irr}(\mathsf{S})}\frac{\norm{\chi(g)}^2}{\chi(1)}
={}\sum_{\chi\in\mathrm{Irr}(\mathsf{S}/M)}\frac{\norm{\chi(g)}^2}{\chi(1)}+{}\sum_{\chi\in\mathrm{Irr}(\mathsf{S}\mid M)}\frac{\norm{\chi(g)}^2}{\chi(1)}\\
&={}\sum_{\chi\in\mathrm{Irr}(\mathsf{S}/M)}\frac{\norm{\tilde{\chi}(gM)}^2}{\chi(1)}+{}\sum_{\chi\in\mathrm{Irr}(\mathsf{S}\mid M)}\frac{\norm{\chi(g)}^2}{\chi(1)}\\
&=\frac{\norm{G/M}}{\norm{\mathrm{cl}_{\mathsf{S}_{G/M}}(gM)}}+{}\sum_{\chi\in\mathrm{Irr}(\mathsf{S}\mid M)}\frac{\norm{\chi(g)}^2}{\chi(1)}=\frac{\norm{G}}{\norm{\mathrm{cl}_{\mathsf{S}}(g)}}+{}\sum_{\chi\in\mathrm{Irr}(\mathsf{S}\mid M)}\frac{\norm{\chi(g)}^2}{\chi(1)},
\end{align*}
which means that 
\[{}\sum_{\chi\in\mathrm{Irr}(\mathsf{S}\mid M)}\frac{\norm{\chi(g)}^2}{\chi(1)}=0.\]
Since each term in this sum is nonnegative, this means that $\chi(g)=0$ for each $\chi\in\mathrm{Irr}(\mathsf{S}\mid M)$.

The fact that {\bf(3)} is equivalent to {\bf(4)}  follows from the definition of ${\bf V}(\mathsf{S}\mid M)$, and that {\bf(5)} and {\bf(1)} are equivalent follows from \cite[Proposition 4.3]{AH08}. So we need only show that {\bf(3)} implies {\bf(5)} to complete the proof. Assume {\bf(3)}. Then the supercharacters of $\mathsf{S}$ lying in $\mathrm{Irr}(\mathsf{S}\mid M)$ are constant on $G\setminus N$ and so on the cosets of $M$ lying outside of $N$. It is clear that the $\mathsf{S}$-irreducible characters with $M$ in their kernels are also constant on the cosets of $M$ lying outside of $N$, so every supercharacter of $\mathsf{S}$ is constant on each coset of $M$ lying outside of $N$. Hence each superclass lying outside of $N$ is a union of $M$-cosets, which gives {\bf(5)}.
\end{proof}

Recall that every $\mathsf{S}$-normal subgroup can be found from the $\mathsf{S}$-character table. Therefore, as a corollary to Theorem~\ref{delta}, it can be deduced from the $\mathsf{S}$-character table whether or not $\mathsf{S}$ is a nontrivial $\Delta$-product.
\begin{cor}\label{deltatable}If $\mathsf{S}$ is $\Delta$-product over $M$ and $N$, this can be deduced from the $\mathsf{S}$-character table using {\bf(3)} of Theorem~\ref{delta}.
\end{cor}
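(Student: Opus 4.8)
The plan is to observe that condition {\bf(3)} of Theorem~\ref{delta} refers only to the values of the $\mathsf{S}$-irreducible characters and to the two $\mathsf{S}$-normal subgroups $M$ and $N$, and that all of this data is already encoded in the $\mathsf{S}$-character table. So the corollary is essentially immediate once one checks that the relevant subgroups and index sets can be recovered from the table.

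First I would recall that for each $\chi\in\mathrm{Irr}(\mathsf{S})$ the kernel $\ker(\chi)$ is the union of those $\mathsf{S}$-classes on which $\chi$ takes the value $\chi(1)$; since $\chi(1)$ is the entry of the column indexed by the trivial class, $\ker(\chi)$ --- viewed as a set of columns of the table --- is read off directly. As noted before the statement, every $\mathsf{S}$-normal subgroup is an intersection of kernels of $\mathsf{S}$-irreducible characters (the fact of Marberg, \cite[Proposition 2.1]{EM11}), so the complete list of $\mathsf{S}$-normal subgroups, each recorded as the set of $\mathsf{S}$-classes it contains, is determined by the table. In particular, for a given pair $M\le N$ of $\mathsf{S}$-normal subgroups the set $\mathrm{Irr}(\mathsf{S}\mid M)=\{\chi\in\mathrm{Irr}(\mathsf{S}):M\nsubseteq\ker(\chi)\}$ is determined, as is the set of columns corresponding to $\mathsf{S}$-classes lying in $G\setminus N$. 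Condition {\bf(3)} then becomes the purely combinatorial assertion that every entry of the table in a row from $\mathrm{Irr}(\mathsf{S}\mid M)$ and a column from $G\setminus N$ equals $0$, which can be checked directly.

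To finish, I would run this check over every pair $M\le N$ of $\mathsf{S}$-normal subgroups (there are finitely many). By the equivalence {\bf(1)}$\Leftrightarrow${\bf(3)} of Theorem~\ref{delta}, the pairs for which the check succeeds are exactly the pairs over which $\mathsf{S}$ is a $\Delta$-product; discarding the trivial instances leaves precisely the information of whether $\mathsf{S}$ is a nontrivial $\Delta$-product, and over which subgroups.

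I do not expect a serious obstacle here: the mathematical content lies in Theorem~\ref{delta} and in the already-cited fact that $\mathsf{S}$-normal subgroups are intersections of kernels. The only point requiring care is the bookkeeping of identifying, from the table alone, which columns belong to a prescribed $\mathsf{S}$-normal subgroup --- but this is exactly what the kernel description above supplies, since an $\mathsf{S}$-normal subgroup realized as an intersection of kernels is realized as the corresponding intersection of sets of columns.
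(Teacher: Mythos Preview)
Your proposal is correct and follows essentially the same approach as the paper: invoke the equivalence {\bf(1)}$\Leftrightarrow${\bf(3)} of Theorem~\ref{delta}, then observe that $\mathsf{S}$-normal subgroups are intersections of kernels of $\mathsf{S}$-irreducible characters and hence readable from the table. The paper's proof is a two-sentence sketch of exactly this; your version simply spells out more of the bookkeeping (identifying $\ker(\chi)$ column by column, recovering $\mathrm{Irr}(\mathsf{S}\mid M)$ and the columns in $G\setminus N$), which is fine and does not deviate from the intended argument.
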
 
\begin{proof}
A supercharacter theory $\mathsf{S}$ is a $\Delta$-product over $\mathsf{S}$-normal subgroups $M$ and $N$ if and only if condition {\bf(3)} of Theorem~\ref{delta} holds. Since every $\mathsf{S}$-normal subgroup $H$ of $G$ is the intersection of the kernels of some $\mathsf{S}$-irreducible characters, specifically those that contain $H$ in their kernel, all of this information can be read off the $\mathsf{S}$-character table.
\end{proof}
\begin{exam}\label{deltaexample}
Let $G=C_{30}$, and let $c_1=(\sqrt{5}-1)/2$ and $c_2=-(\sqrt{5}+1)/2$. The table 
\begin{center}
\begin{tabular}{|c|c|c|c|c|c|}
\hline
$\mathsf{S}$&1&$K_1$&$K_2$&$K_3$&$K_4$\\
\hline
$\mathbbm{1}$&1&1&1&1&1\\
\hline
$\chi_1$&1&$-1$&1&1&1\\
\hline
$\chi_2$&4&0&$-2$&4&4\\
\hline
$\chi_3$&12&0&0&$6c_1$&$6c_2$\\
\hline
$\chi_4$&12&0&0&$6c_2$&$6c_1$\\
\hline
\end{tabular}\\[5ex]
\end{center}

\noindent is the $\mathsf{S}$-character table for a supercharacter theory $\mathsf{S}$ of $G$. From the table, we see that the set of $\mathsf{S}$-normal subgroups is $\{1,H,N\}$, where $H=\{1\}\cup K_3\cup K_4$ and $N=K_2\cup H$. Using Lemma~\ref{column}, we find that $\norm{K_2}=10$, and $\norm{K_3}=\norm{K_4}=2$. It follows that $\norm{H}=5$ and $\norm{N}=15$. The $\mathsf{S}$-irreducible characters that do not have $N$ in their kernels are $\{\chi_2,\chi_3,\chi_4\}$ and each of these vanishes on $G\setminus N=K_1$. Therefore, this supercharacter theory is a nontrivial $\Delta$-product over $H$ and $N$. 
\end{exam}
We have the following corollaries to Theorem~\ref{delta}. The first gives some information about the singleton classes and linear characters of a $\Delta$-product.
\begin{cor}
Let $\mathsf{S}$ be a supercharacter theory of $G$. If $\mathsf{S}$ is a $\Delta$-product over $M$ and $N$, then ${\bf Z}(\mathsf{S})\le N$ and $M\le[G,\mathsf{S}]$.
\end{cor}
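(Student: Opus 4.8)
The plan is to read off both containments directly from the equivalent conditions established in Theorem~\ref{delta}, using condition \textbf{(5)} as the main tool. Throughout I would assume the $\Delta$-product is nontrivial, i.e.\ $M\ne 1$ and $N\ne G$; this is the only point requiring care, because when $M=1$ or $N=G$ the $\Delta$-product hypothesis is automatic and the two containments have no content (indeed they can genuinely fail, e.g.\ $\mathsf{S}=\mathsf{m}(G)$ with $G$ abelian is a $\Delta$-product over $1$ and any proper $N$, yet ${\bf Z}(\mathsf{S})=G$), so ``$\Delta$-product over $M$ and $N$'' must here be read in its substantive sense.

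\textit{Step 1: $M\le[G,\mathsf{S}]$.} Since $N\ne G$, fix an element $g\in G\setminus N$. By condition \textbf{(5)} of Theorem~\ref{delta}, for every $m\in M$ there is some $k\in\mathrm{cl}_{\mathsf{S}}(g)$ with $g^{-1}k=m$. As $g\in G$ and $k\in\mathrm{cl}_{\mathsf{S}}(g)$, the element $m=g^{-1}k$ is one of the generators of $[G,\mathsf{S}]=\inner{g^{-1}k:g\in G,\ k\in\mathrm{cl}_{\mathsf{S}}(g)}$, so $m\in[G,\mathsf{S}]$. Since $m$ was an arbitrary element of $M$, this gives $M\le[G,\mathsf{S}]$.

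\textit{Step 2: ${\bf Z}(\mathsf{S})\le N$.} Suppose not, and pick $z\in{\bf Z}(\mathsf{S})\setminus N$, so that $\mathrm{cl}_{\mathsf{S}}(z)=\{z\}$. Applying condition \textbf{(5)} of Theorem~\ref{delta} with $g=z$: for each $m\in M$ there is $k\in\mathrm{cl}_{\mathsf{S}}(z)=\{z\}$ with $z^{-1}k=m$, which forces $k=z$ and hence $m=1$. Thus $M=1$, contradicting nontriviality; therefore ${\bf Z}(\mathsf{S})\le N$. (Alternatively, one can use condition \textbf{(3)}: by Lemma~\ref{centerrestriction} every $\chi\in\mathrm{Irr}(\mathsf{S})$ satisfies $\norm{\chi(z)}=\chi(1)\ne0$ for $z\in{\bf Z}(\mathsf{S})$, so if $z\notin N$ then $\mathrm{Irr}(\mathsf{S}\mid M)=\emptyset$, i.e.\ $M$ lies in the kernel of every $\mathsf{S}$-irreducible character and hence $M=1$.)

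There is no real obstacle: each step is a one-line consequence of Theorem~\ref{delta} together with the definition of $[G,\mathsf{S}]$. The only subtlety is the bookkeeping around the degenerate cases $M=1$ and $N=G$, which is precisely why the corollary is meaningful only for a nontrivial $\Delta$-product.
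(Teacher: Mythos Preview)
Your proof is correct. Both you and the paper read the two containments off Theorem~\ref{delta}, but you work through condition \textbf{(5)} while the paper uses \textbf{(2)} for ${\bf Z}(\mathsf{S})\le N$ (every $\mathsf{S}$-class outside $N$ has at least $\norm{M}>1$ elements) and \textbf{(3)} for $M\le[G,\mathsf{S}]$ (each $\chi\in\mathrm{Irr}(\mathsf{S}\mid M)$ vanishes on $G\setminus N\ne\emptyset$, hence is nonlinear, so $\mathrm{Irr}(\mathsf{S}/[G,\mathsf{S}])\subseteq\mathrm{Irr}(\mathsf{S}/M)$ and thus $M\le[G,\mathsf{S}]$). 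Your route via \textbf{(5)} is arguably the more elementary one, since it feeds straight into the defining generating sets of $[G,\mathsf{S}]$ and ${\bf Z}(\mathsf{S})$ without passing through characters; the paper's route keeps to the character-theoretic flavor of the section and relies implicitly on Lemma~\ref{gsproperty}. You are also right to isolate the degenerate cases $M=1$ and $N=G$: the paper's argument tacitly uses $\norm{M}>1$ in the first step and $N\ne G$ in the second, and your abelian counterexample shows the statement genuinely fails without a nontriviality hypothesis.
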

\begin{proof}
Since every $\mathsf{S}$-class lying outside of $N$ has at least $\norm{M}$ elements by {\bf(2)} of Theorem~\ref{delta}, we must have ${\bf Z}(\mathsf{S})\le N$. 

Since every $\mathsf{S}$-character lying in $\mathrm{Irr}(\mathsf{S}\mid M)$ vanishes outside of $N$ by {\bf(3)} of Theorem~\ref{delta}, we must that $\mathrm{Irr}(\mathsf{S}/[G,\mathsf{S}])\subseteq\mathrm{Irr}(\mathsf{S}/M)$. This means that $M\le [G,\mathsf{S}]$.
\end{proof}

\begin{cor}\label{quotientdelta}Let $\mathsf{S}$ be a supercharacter theory of $G$, and assume that $\mathsf{S}$ is a $\Delta$-product over $M$ and $N$. Let $H\lhd_{\mathsf{S}}G$ satisfy $HM<N$. Then $\mathsf{S}_{G/H}$ is a $\Delta$-product over $HM/H$ and $N/H$.\end{cor}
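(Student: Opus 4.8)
The plan is to deduce the statement from the equivalence of conditions {\bf(1)} and {\bf(3)} in Theorem~\ref{delta}, transported along the canonical projection $\pi\colon G\to G/H$. First I would clear the preliminaries: a product of $\mathsf{S}$-normal subgroups is $\mathsf{S}$-normal (as used in Proposition~\ref{vanishingprod}), so $HM$ is $\mathsf{S}$-normal, and since $H\le HM\le N$ the subgroups $HM/H\le N/H$ are $\mathsf{S}_{G/H}$-normal subgroups of $G/H$; the hypothesis $HM<N$ then gives $HM/H<N/H$. (This strict inequality only ensures that the resulting $\Delta$-product is not a $\ast$-product; the argument below does not use it, and if $HM=N$ the same reasoning shows $\mathsf{S}_{G/H}$ is a $\ast$-product over $N/H$.)

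The key step is a dictionary between $\mathrm{Irr}(\mathsf{S})$ and $\mathrm{Irr}(\mathsf{S}_{G/H})$. Recall that $\mathrm{Irr}(\mathsf{S}_{G/H})$ is the set of $\mathsf{S}$-irreducible characters having $H$ in their kernel, regarded as characters of $G/H$; write $\tilde\chi$ for the character of $G/H$ deflated from such a $\chi$, so that $\tilde\chi(gH)=\chi(g)$ for all $g\in G$. If $\bar\chi=\tilde\chi\in\mathrm{Irr}(\mathsf{S}_{G/H}\mid HM/H)$, then $HM\nsubseteq\ker(\chi)$, and since $H\le\ker(\chi)$ this says exactly $M\nsubseteq\ker(\chi)$, i.e.\ $\chi\in\mathrm{Irr}(\mathsf{S}\mid M)$.

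Now I would verify condition {\bf(3)} of Theorem~\ref{delta} for the supercharacter theory $\mathsf{S}_{G/H}$ relative to the pair $HM/H\le N/H$. Let $\bar g\in(G/H)\setminus(N/H)$ and $\bar\chi\in\mathrm{Irr}(\mathsf{S}_{G/H}\mid HM/H)$, and choose a preimage $g\in G$ of $\bar g$. Since $H\le N$, the condition $\bar g\notin N/H$ forces $g\notin N$. By the dictionary above $\chi\in\mathrm{Irr}(\mathsf{S}\mid M)$, so the hypothesis that $\mathsf{S}$ is a $\Delta$-product over $M$ and $N$, combined with the implication {\bf(1)}$\Rightarrow${\bf(3)} of Theorem~\ref{delta}, gives $\chi(g)=0$; hence $\bar\chi(\bar g)=\tilde\chi(gH)=\chi(g)=0$. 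Thus {\bf(3)} holds for $\mathsf{S}_{G/H}$, and the implication {\bf(3)}$\Rightarrow${\bf(1)} of Theorem~\ref{delta} shows that $\mathsf{S}_{G/H}$ is a $\Delta$-product over $HM/H$ and $N/H$.

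I do not expect any genuine obstacle here; the only points requiring a moment of care are the $\mathsf{S}$-normality of $HM$ and the precise identification $\mathrm{Irr}(\mathsf{S}_{G/H})\leftrightarrow\{\chi\in\mathrm{Irr}(\mathsf{S}):H\le\ker(\chi)\}$ together with the value equation $\tilde\chi(gH)=\chi(g)$, both of which are immediate from the construction of the induced theory $\mathsf{S}_{G/H}$ recalled above. An alternative, essentially equivalent route goes through condition {\bf(4)}: using Proposition~\ref{vanishingprod} and the identity ${\bf V}(\tilde\chi)={\bf V}(\chi)/H$ for $\chi$ with $H\le\ker(\chi)$, one sees that ${\bf V}(\mathsf{S}_{G/H}\mid HM/H)$ is the image in $G/H$ of $\prod\{{\bf V}(\chi):\chi\in\mathrm{Irr}(\mathsf{S}\mid M),\ H\le\ker(\chi)\}$, which is contained in ${\bf V}(\mathsf{S}\mid M)\le N$, whence ${\bf V}(\mathsf{S}_{G/H}\mid HM/H)\le N/H$.
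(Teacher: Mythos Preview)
Your proposal is correct and follows essentially the same route as the paper: use the implication {\bf(1)}$\Rightarrow${\bf(3)} of Theorem~\ref{delta} to get that every $\chi\in\mathrm{Irr}(\mathsf{S}\mid M)$ vanishes off $N$, identify $\mathrm{Irr}(\mathsf{S}_{G/H}\mid HM/H)$ with $\mathrm{Irr}(\mathsf{S}/H)\cap\mathrm{Irr}(\mathsf{S}\mid M)$ (the paper phrases this as the equality $\mathrm{Irr}(\mathsf{S}/H)\cap\mathrm{Irr}(\mathsf{S}\mid M)=\mathrm{Irr}(\mathsf{S}/H)\cap\mathrm{Irr}(\mathsf{S}\mid HM)$), conclude these characters vanish off $N/H$, and apply {\bf(3)}$\Rightarrow${\bf(1)}. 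Your additional remarks on the preliminaries and the alternative via condition {\bf(4)} are fine but not needed.
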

\begin{proof}
By Theorem~\ref{delta}, $\chi$ vanishes on $G\setminus N$ for all $\chi\in\mathrm{Irr}(\mathsf{S}\mid M)$. So every $\chi$ in $\mathrm{Irr}(\mathsf{S}/H)\cap\mathrm{Irr}(\mathsf{S}\mid M)$ vanishes outside of $N$. Since $\mathrm{Irr}(\mathsf{S}/H)\cap\mathrm{Irr}(\mathsf{S}\mid M)=\mathrm{Irr}(\mathsf{S}/H)\cap\mathrm{Irr}(\mathsf{S}\mid HM)$, this is equivalent to every $\chi$ in $\mathrm{Irr}(\mathsf{S}_{G/H}\mid HM/H)$ vanishing outside of $N/H$. The result follows from Theorem~\ref{delta}.
\end{proof}

The following result shows how one may \enquote{extend the gap} between the subgroups $M$ and $N$ for which $\mathsf{S}$ is a $\Delta$-product. 
\begin{cor}\label{pushapart}Let $\mathsf{S}$ be a supercharacter theory of $G$. Suppose that $\mathsf{S}$ is a $\Delta$-product over $M$ and $N$, and $K\lhd_{\mathsf{S}}G$. If $N\le K$, then $\mathsf{S}$ is a $\Delta$-product over $M$ and $K$. Also if $K\le M$, then $\mathsf{S}$ is a $\Delta$-product over $K$ and $N$.
\end{cor}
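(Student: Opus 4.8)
The plan is to deduce both assertions directly from Theorem~\ref{delta}, using the equivalence of condition {\bf(1)} with the vanishing-off condition {\bf(4)}, together with the monotonicity statement of Lemma~\ref{mono}. Before invoking these, I would first observe that in each case the relevant pair of subgroups actually satisfies the containment needed to speak of a $\Delta$-product over it: in the first case $M\le N\le K$ gives $M\le K$, and in the second case $K\le M\le N$ gives $K\le N$; moreover $K$ is assumed $\mathsf{S}$-normal, so all hypotheses of Theorem~\ref{delta} are in place for the pairs $(M,K)$ and $(K,N)$ respectively.

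For the first part, since $\mathsf{S}$ is a $\Delta$-product over $M$ and $N$, condition {\bf(4)} of Theorem~\ref{delta} gives ${\bf V}(\mathsf{S}\mid M)\le N$. As $N\le K$, this yields ${\bf V}(\mathsf{S}\mid M)\le K$, which is precisely condition {\bf(4)} of Theorem~\ref{delta} for the pair $(M,K)$. Hence $\mathsf{S}$ is a $\Delta$-product over $M$ and $K$.

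For the second part, Lemma~\ref{mono} applied to $K\le M$ gives ${\bf V}(\mathsf{S}\mid K)\le{\bf V}(\mathsf{S}\mid M)$, and the latter is contained in $N$ by condition {\bf(4)} of Theorem~\ref{delta} applied to the $\Delta$-product over $M$ and $N$. Therefore ${\bf V}(\mathsf{S}\mid K)\le N$, which is condition {\bf(4)} of Theorem~\ref{delta} for the pair $(K,N)$, and so $\mathsf{S}$ is a $\Delta$-product over $K$ and $N$.

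I do not anticipate any genuine obstacle here: the whole content has been pushed into Theorem~\ref{delta} and Lemma~\ref{mono}, so the corollary is a short formal consequence. The only point requiring a moment's care is the bookkeeping of which subgroup plays the role of "$M$" and which the role of "$N$" in the definition of a $\Delta$-product, and checking the trivial containments $M\le K$ and $K\le N$ so that the phrase "$\Delta$-product over \dots" is meaningful in each case.
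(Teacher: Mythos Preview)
Your proof is correct and follows exactly the approach taken in the paper, which simply cites Lemma~\ref{mono} and condition {\bf(4)} of Theorem~\ref{delta}. Your added verification that $M\le K$ and $K\le N$ so that the phrase ``$\Delta$-product over \dots'' makes sense is a helpful bit of bookkeeping the paper leaves implicit.
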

\begin{proof}
This is immediate from Lemma~\ref{mono} and the equivalent characterization {\bf(4)} of Theorem~\ref{delta}.
\end{proof}
As opposed to Corollary~\ref{pushapart}, the following two results shows how one may \enquote{shrink the gap.}
\begin{prop}Let $\mathsf{S}$ be a supercharacter theory of $G$, and let $M,N_1$, and $N_2$ be $\mathsf{S}$-normal subgroups of $G$ satisfying $M\subseteq N_1\cap N_2$. If $\mathsf{S}$ is a $\Delta$-product over $M$ and $N_1$, and also over $M$ and $N_2$, then $\mathsf{S}$ is a $\Delta$-product over $N_1\cap N_2$.
\end{prop}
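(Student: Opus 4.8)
The statement is almost immediate from the equivalence of {\bf(1)} and {\bf(4)} in Theorem~\ref{delta}, once one checks that $N_1\cap N_2$ is a legitimate \emph{$\mathsf{S}$-normal} upper subgroup for a $\Delta$-product over $M$. The proof will consist of three short steps: verify that $N_1\cap N_2\lhd_{\mathsf{S}}G$, record the two containments coming from the hypotheses, and intersect them.

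\textbf{Step 1: $N_1\cap N_2$ is $\mathsf{S}$-normal.} If $g\in N_1\cap N_2$, then $\mathrm{cl}_{\mathsf{S}}(g)\subseteq N_1$ and $\mathrm{cl}_{\mathsf{S}}(g)\subseteq N_2$ because $N_1$ and $N_2$ are each unions of $\mathsf{S}$-classes; hence $\mathrm{cl}_{\mathsf{S}}(g)\subseteq N_1\cap N_2$, so $N_1\cap N_2$ is a union of $\mathsf{S}$-classes. Together with the hypothesis $M\subseteq N_1\cap N_2$, this means the pair $(M,\,N_1\cap N_2)$ is an admissible pair of $\mathsf{S}$-normal subgroups to which Theorem~\ref{delta} may be applied.

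\textbf{Step 2: intersect the vanishing-off containments.} Since $\mathsf{S}$ is a $\Delta$-product over $M$ and $N_1$, characterization {\bf(4)} of Theorem~\ref{delta} gives ${\bf V}(\mathsf{S}\mid M)\le N_1$; likewise ${\bf V}(\mathsf{S}\mid M)\le N_2$. Therefore
\[
{\bf V}(\mathsf{S}\mid M)\le N_1\cap N_2 .
\]
Applying the implication {\bf(4)}$\Rightarrow${\bf(1)} of Theorem~\ref{delta} with $N$ replaced by the $\mathsf{S}$-normal subgroup $N_1\cap N_2$ (which contains $M$ by Step 1), we conclude that $\mathsf{S}$ is a $\Delta$-product over $M$ and $N_1\cap N_2$, as claimed.

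\textbf{Expected obstacle.} There is essentially no obstacle here beyond bookkeeping: the only thing to be careful about is that Theorem~\ref{delta} is invoked with a \emph{valid} pair, which is exactly what Step 1 secures, and that the statement's phrase ``over $N_1\cap N_2$'' is understood as ``over $M$ and $N_1\cap N_2$''. One could equally run the argument through characterization {\bf(3)} (each $\chi\in\mathrm{Irr}(\mathsf{S}\mid M)$ vanishes on $G\setminus N_1$ and on $G\setminus N_2$, hence on $G\setminus(N_1\cap N_2)$), but the ${\bf V}(\mathsf{S}\mid M)$ formulation is cleanest since ${\bf V}(\mathsf{S}\mid M)$ does not depend on the upper subgroup at all.
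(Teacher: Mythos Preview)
Your proof is correct and follows essentially the same route as the paper: use characterization {\bf(4)} of Theorem~\ref{delta} to get ${\bf V}(\mathsf{S}\mid M)\le N_1$ and ${\bf V}(\mathsf{S}\mid M)\le N_2$, intersect, and apply {\bf(4)}$\Rightarrow${\bf(1)}. The paper's version is even terser---it does not pause to verify that $N_1\cap N_2$ is $\mathsf{S}$-normal---so your Step~1 is a welcome bit of extra care rather than a deviation in strategy.
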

\begin{proof}
Assume that $\mathsf{S}$ is a $\Delta$-product over $M$ and $N_1$, and also over $M$ and $N_2$. By Theorem~\ref{delta}, we have that ${\bf V}(\mathsf{S}\mid M)\le N_1\cap N_2$, which means that $\mathsf{S}$ is a $\Delta$-product over $M$ and $N_1\cap N_2$.
\end{proof}

\begin{prop}
Let $\mathsf{S}$ be a supercharacter theory of $G$ and let $M_1,M_2,$ and $N$ be $\mathsf{S}$-normal subgroups of $G$. If $\mathsf{S}$ is a $\Delta$-product over $M_1$ and $N$, and also over $M_2$ and $N$, then $\mathsf{S}$ is a $\Delta$-product over $M_1M_2$ and $N$.
\end{prop}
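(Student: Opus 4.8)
The plan is to mimic the proof of the preceding proposition (about $N_1 \cap N_2$), but now exploiting the multiplicativity of $\mathbf{V}(\mathsf{S}\mid-)$ in its argument rather than monotonicity. Concretely, the characterization {\bf(4)} of Theorem~\ref{delta} says that $\mathsf{S}$ being a $\Delta$-product over $M_i$ and $N$ is equivalent to $\mathbf{V}(\mathsf{S}\mid M_i)\le N$ for $i=1,2$. So my two hypotheses translate into $\mathbf{V}(\mathsf{S}\mid M_1)\le N$ and $\mathbf{V}(\mathsf{S}\mid M_2)\le N$.

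The key step is then Lemma~\ref{vanprod}, which gives $\mathbf{V}(\mathsf{S}\mid M_1M_2)=\mathbf{V}(\mathsf{S}\mid M_1)\,\mathbf{V}(\mathsf{S}\mid M_2)$. Since the product of two subgroups each contained in $N$ is again contained in $N$ (here $N$ being a subgroup is what makes this work, and $\mathbf{V}(\mathsf{S}\mid M_1)$, $\mathbf{V}(\mathsf{S}\mid M_2)$ are in fact $\mathsf{S}$-normal by Proposition~\ref{vanishingprod}, though I only need that they are subsets of $N$), we conclude $\mathbf{V}(\mathsf{S}\mid M_1M_2)\le N$. Applying the equivalence of {\bf(4)} and {\bf(1)} in Theorem~\ref{delta} once more — noting $M_1M_2\le N$ holds since both $M_1,M_2\le N$, so the hypothesis "$M\le N$" of the theorem is satisfied — gives that $\mathsf{S}$ is a $\Delta$-product over $M_1M_2$ and $N$, as desired.

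I do not anticipate any serious obstacle; this is a short deduction from Lemma~\ref{vanprod} and Theorem~\ref{delta}. The only minor point worth checking is that $M_1M_2$ is genuinely an $\mathsf{S}$-normal subgroup of $G$ (so that the phrase "$\Delta$-product over $M_1M_2$ and $N$" is meaningful) — but this is immediate since the product of two $\mathsf{S}$-normal subgroups is $\mathsf{S}$-normal, and it is also implicitly guaranteed by Proposition~\ref{vanishingprod} together with Lemma~\ref{vanprod} applied with argument $M_1M_2$. So the whole proof is essentially one invocation of each of Lemma~\ref{vanprod} and Theorem~\ref{delta}.

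\begin{proof}
By the equivalence of {\bf(1)} and {\bf(4)} in Theorem~\ref{delta}, the hypotheses give $\mathbf{V}(\mathsf{S}\mid M_1)\le N$ and $\mathbf{V}(\mathsf{S}\mid M_2)\le N$. By Lemma~\ref{vanprod},
\[\mathbf{V}(\mathsf{S}\mid M_1M_2)=\mathbf{V}(\mathsf{S}\mid M_1)\,\mathbf{V}(\mathsf{S}\mid M_2)\le N,\]
since $N$ is a subgroup of $G$ containing both factors. As $M_1,M_2\le N$, we have $M_1M_2\le N$, and $M_1M_2$ is $\mathsf{S}$-normal; so Theorem~\ref{delta} applies and the containment $\mathbf{V}(\mathsf{S}\mid M_1M_2)\le N$ shows that $\mathsf{S}$ is a $\Delta$-product over $M_1M_2$ and $N$.
\end{proof}
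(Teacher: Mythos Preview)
Your proof is correct and essentially identical to the paper's: both invoke Theorem~\ref{delta} to reduce to $\mathbf{V}(\mathsf{S}\mid M_i)\le N$, apply Lemma~\ref{vanprod} to get $\mathbf{V}(\mathsf{S}\mid M_1M_2)\le N$, and conclude via Theorem~\ref{delta}. The only cosmetic difference is that the paper cites Lemma~\ref{vanishingcontainment} to justify $M_1M_2\le N$, whereas you (equally validly) read $M_i\le N$ directly off the hypothesis of being a $\Delta$-product.
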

\begin{proof}
By Theorem~\ref{delta}, we have that ${\bf V}(\mathsf{S}\mid M_1)$ and ${\bf V}(\mathsf{S}\mid M_2)$ are contained in $N$. Hence so is their product, as is $M_1M_2$ by Lemma~\ref{vanishingcontainment}. By Lemma~\ref{vanprod}, we have
\[{\bf V}(\mathsf{S}\mid M_1M_2)={\bf V}(\mathsf{S}\mid M_1)\mspace{2mu}{\bf V}(\mathsf{S}\mid M_2)\le N,\]
so $\mathsf{S}$ is a $\Delta$-product over $M_1M_2$ and $N$ by Theorem~\ref{delta}.
\end{proof}

\section{$\ast$-products}
The $\ast$-product is a special example of a $\Delta$-product. Specifically, \enquote{$\mathsf{S}$ is an $\ast$-product over $N$} means \enquote{$\mathsf{S}$ is a $\Delta$-product over $N$ and $N$.} In this section, we tailor the results of the previous section to $\ast$-products.
\begin{proof}[Proof of Theorem~\ref{introstarprod}]
First we assume {\bf(1)} and prove {\bf(2)}. By Theorem~\ref{delta}, we have $\chi(g)=0$ for each $g\in G\setminus N$ and $\chi\in\mathrm{Irr}(\mathsf{S}\mid N)$. Now, let $g\in N$ and suppose that $\chi(g)=0$ for each $\chi\in\mathrm{Irr}(\mathsf{S}\mid N)$. By Theorem~\ref{column}, it follows that $\norm{\mathrm{cl}_{\mathsf{S}}(g)}=\norm{N}$. As this is impossible, no such $g$ exists and {\bf(2)} is proved. Now, {\bf(2)} implies {\bf(3)} is immediate and {\bf(3)} implies {\bf(1)} follows from Theorem~\ref{delta}. The proof is complete.
\end{proof}
\begin{rem}
It is clear from the definitions that $(G,N)$ is a Camina pair if and only if $\mathsf{m}(G)$ is a $\ast$-product over the Camina kernel $N$. As mentioned in the introduction, Lewis and Wynn proved in \cite{camina} that $\mathsf{m}(G)$ is a nontrivial $\ast$-product is enough to guarantee that every supercharacter theory of a group $G$ with a Camina kernel is a nontrivial $\ast$-product. This surprising result implies therefore that every supercharacter theory of a group $G$ is a nontrivial $\ast$-product if and only if $\mathsf{m}(G)$ is.
\end{rem}
Since a $\ast$-product is a special example of a $\Delta$-product, we already knew from Corollary~\ref{deltatable} that one may determine if $\mathsf{S}$ is a nontrivial $\ast$-product from the $\mathsf{S}$-character table. We also get this as a corollary to Theorem~\ref{introstarprod}.
\begin{cor}If $\mathsf{S}$ is $\ast$-product over $N$, this can be deduced from the $\mathsf{S}$-character table using {\bf(2)} of Theorem~\ref{introstarprod}.\end{cor}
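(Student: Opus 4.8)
The plan is to follow the proof of Corollary~\ref{deltatable} essentially verbatim, transcribing condition {\bf(2)} of Theorem~\ref{introstarprod} into statements about entries of the $\mathsf{S}$-character table. By Theorem~\ref{introstarprod}, $\mathsf{S}$ is a $\ast$-product over an $\mathsf{S}$-normal subgroup $N$ if and only if {\bf(2)} holds for $N$, so it is enough to show that, for each $\mathsf{S}$-normal $N$, the validity of {\bf(2)} can be read off the table.

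First I would recall the facts assembled in Section~2: every $\mathsf{S}$-normal subgroup is the intersection of the kernels of the $\mathsf{S}$-irreducible characters containing it, and a superclass $K$ lies in $\ker(\chi)$ exactly when $\chi(K)=\chi(1)$. Consequently the table determines the complete list of $\mathsf{S}$-normal subgroups together with, for each such $N$, which superclasses are contained in $N$ and which $\chi\in\mathrm{Irr}(\mathsf{S})$ satisfy $N\subseteq\ker(\chi)$ --- equivalently, which $\chi$ belong to $\mathrm{Irr}(\mathsf{S}\mid N)$.

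Now fix an $\mathsf{S}$-normal subgroup $N$. The first clause of {\bf(2)} says that $\chi$ vanishes on every superclass not contained in $N$, for each $\chi\in\mathrm{Irr}(\mathsf{S}\mid N)$; the second says that every superclass contained in $N$ carries a nonzero value of some $\chi\in\mathrm{Irr}(\mathsf{S}\mid N)$. By the previous paragraph both clauses are conditions purely on table entries, indexed by known rows (the set $\mathrm{Irr}(\mathsf{S}\mid N)$) and known columns (the superclasses inside, respectively outside, $N$), and hence are decidable from the $\mathsf{S}$-character table. Therefore so is whether $\mathsf{S}$ is a $\ast$-product over some $\mathsf{S}$-normal subgroup. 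I do not anticipate any genuine obstacle here; the statement is a bookkeeping consequence of Theorem~\ref{introstarprod} exactly parallel to Corollary~\ref{deltatable}, the only point worth spelling out explicitly being the identification of the kernels of $\mathsf{S}$-irreducible characters from the table.
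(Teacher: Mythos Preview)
Your proposal is correct and follows essentially the same approach as the paper's own proof, which simply invokes the equivalence in Theorem~\ref{introstarprod} and notes that $N$ is recoverable from the table as an intersection of kernels of $\mathsf{S}$-irreducible characters. You have just spelled out in more detail how the two clauses of condition {\bf(2)} translate into table-entry checks, but the underlying argument is identical.
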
 
\begin{proof}
A supercharacter theory $\mathsf{S}$ is a $\ast$-product over $N\lhd_{\mathsf{S}}G$ if and only if condition {\bf(2)} of Theorem~\ref{introstarprod} holds. Since $N$ is the intersection of the kernels of all $\mathsf{S}$-irreducible characters that contain $N$ in their kernel, this information can be read off of the $\mathsf{S}$-character table.
\end{proof}

We mention the following consequence of Theorem~\ref{introstarprod}. Note that if $[G,G]\le N$, then every irreducible character of $G/N$ is linear. Therefore, if $\mathsf{S}$ is any supercharacter theory of $N$ whose superclasses of unions of $G$-classes, then the supercharacter theory $\mathsf{T}=\mathsf{S}\ast\mathsf{m}(G/N)$ of $G$ satisfies $N=\mathbf{V}(\mathsf{S})$. Theorem~\ref{introstarprod} shows that the converse of this statement also holds. 
\begin{cor}
Let $N$ be a normal subgroup of $G$. There is a supercharacter theory $\mathsf{S}$ of $G$ satisfying $\mathbf{V}(\mathsf{S})=N$ if and only if $G/N$ is abelian. In the event that $G/N$ is abelian and $N\lhd_{\mathsf{S}}G$, we have $\mathbf{V}(\mathsf{S})=N$ if and only if $\mathsf{S}=\mathsf{S}_N\ast\mathsf{m}(G/N)$.
\end{cor}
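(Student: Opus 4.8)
The plan is to funnel the statement through Theorem~\ref{introstarprod} by way of the identity $\mathbf{V}(\mathsf{S})=\mathbf{V}(\mathsf{S}\mid[G,\mathsf{S}])$, so that the real content becomes a computation of $[G,\mathsf{S}]$. I would prove the second sentence first and read the first sentence off of it; throughout, $G/N$ is abelian and (for the second sentence) $N\lhd_{\mathsf{S}}G$.

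First consider the implication $\mathsf{S}=\mathsf{S}_N\ast\mathsf{m}(G/N)\Rightarrow\mathbf{V}(\mathsf{S})=N$, and assume $N<G$. By the definition of the $\ast$-product every $\mathsf{S}$-class lying outside $N$ is a full $N$-coset, so for $g\in G\setminus N$ the set $\{g^{-1}k:k\in\mathrm{cl}_{\mathsf{S}}(g)\}$ is all of $N$, whence $N\le[G,\mathsf{S}]$; on the other hand $\mathsf{S}_{G/N}=\mathsf{m}(G/N)$ together with $G/N$ abelian gives $\mathbf{Z}(\mathsf{S}_{G/N})=G/N$, so $[G,\mathsf{S}]\le N$ by Lemma~\ref{gsproperty}. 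Hence $[G,\mathsf{S}]=N$, and so $\mathbf{V}(\mathsf{S})=\mathbf{V}(\mathsf{S}\mid[G,\mathsf{S}])=\mathbf{V}(\mathsf{S}\mid N)$, which equals $N$ by Theorem~\ref{introstarprod} because $\mathsf{S}$ is a $\ast$-product over $N$. This settles the backward half of the first sentence as well: if $G/N$ is abelian, then $\mathsf{S}=\mathsf{m}(G)_N\ast\mathsf{m}(G/N)$ is a supercharacter theory of $G$ with $N\lhd_{\mathsf{S}}G$, hence $\mathbf{V}(\mathsf{S})=N$. The forward half is quick: if $\mathbf{V}(\mathsf{S})=N$ for some $\mathsf{S}$, then $[G,G]\le[G,\mathsf{S}]$ (every ordinary commutator $g^{-1}(x^{-1}gx)$ has the form $g^{-1}k$ with $k\in\mathrm{cl}_{\mathsf{S}}(g)$), and $[G,\mathsf{S}]\le\mathbf{V}(\mathsf{S}\mid[G,\mathsf{S}])=\mathbf{V}(\mathsf{S})=N$ by Lemma~\ref{vanishingcontainment}, so $[G,G]\le N$ and $G/N$ is abelian.

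The remaining implication $\mathbf{V}(\mathsf{S})=N\Rightarrow\mathsf{S}=\mathsf{S}_N\ast\mathsf{m}(G/N)$ is the heart of the matter. Lemma~\ref{vanishingcontainment} again gives $[G,\mathsf{S}]\le N$, and it suffices to establish $\mathbf{V}(\mathsf{S}\mid N)=N$: then Theorem~\ref{introstarprod} makes $\mathsf{S}$ a $\ast$-product over $N$, and since $[G,\mathsf{S}]\le N$ forces $\mathbf{Z}(\mathsf{S}_{G/N})=G/N$ (Lemma~\ref{gsproperty}), the supercharacter theory $\mathsf{S}_{G/N}$ has only singleton classes and therefore coincides with $\mathsf{m}(G/N)$, yielding $\mathsf{S}=\mathsf{S}_N\ast\mathsf{m}(G/N)$. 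Now $N\le\mathbf{V}(\mathsf{S}\mid N)$ always (Lemma~\ref{vanishingcontainment}), and by Proposition~\ref{vanishingprod} the reverse inclusion amounts to $\mathbf{V}(\chi)\le N$ for each $\chi\in\mathrm{Irr}(\mathsf{S}\mid N)$; the nonlinear such $\chi$ already satisfy $\mathbf{V}(\chi)\le\mathbf{V}(\mathsf{S}\mid[G,\mathsf{S}])=\mathbf{V}(\mathsf{S})=N$. The step I expect to be the main obstacle is thus the last one: ruling out a linear $\mathsf{S}$-irreducible character that omits $N$ from its kernel — equivalently, upgrading $[G,\mathsf{S}]\le N$ to $[G,\mathsf{S}]=N$ — from the hypotheses $\mathbf{V}(\mathsf{S})=N$ and $G/N$ abelian.
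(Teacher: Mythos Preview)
You have correctly isolated the obstacle, and it is not merely a missing lemma: the implication you flag is \emph{false}. Take $G=\langle a,b,c\rangle\cong C_2^3$, set $N=\langle b,c\rangle$, and let $\mathsf{S}$ be the supercharacter theory with classes
\[
\{1\},\ \{c\},\ \{b\},\ \{bc\},\ \{a,ac\},\ \{ab,abc\}.
\]
This is the $\Delta$-product $\mathsf{m}(N)\mathop{\Delta}\mathsf{m}(G/\langle c\rangle)$ over $\langle c\rangle$ and $N$, so it is a legitimate supercharacter theory. One computes $[G,\mathsf{S}]=\langle c\rangle$, and $\mathrm{Irr}(\mathsf{S}\mid\langle c\rangle)$ consists of the two degree-$2$ supercharacters $\sigma_1=\chi_{001}+\chi_{101}$ and $\sigma_2=\chi_{011}+\chi_{111}$ (in the obvious labeling of $\mathrm{Irr}(G)$). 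Both $\sigma_i$ vanish on $G\setminus N$ and are nonzero on all of $N$, so $\mathbf{V}(\mathsf{S})=\mathbf{V}(\sigma_1)\mathbf{V}(\sigma_2)=N$. Yet the $\mathsf{S}$-class $\{a,ac\}$ is not a union of $N$-cosets, so $\mathsf{S}$ is not a $\ast$-product over $N$; equivalently $\mathbf{V}(\mathsf{S}\mid N)=G$ because the linear supercharacter $\chi_{010}$ lies in $\mathrm{Irr}(\mathsf{S}\mid N)$. Thus $\mathbf{V}(\mathsf{S})=N$ with $N\lhd_{\mathsf{S}}G$ and $G/N$ abelian, but $\mathsf{S}\neq\mathsf{S}_N\ast\mathsf{m}(G/N)$.

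The paper offers no formal proof of this corollary---only the sentence ``Theorem~\ref{introstarprod} shows that the converse of this statement also holds''---and that sentence is where the error lives. Your analysis pinpoints exactly why: passing from $\mathbf{V}(\mathsf{S}\mid[G,\mathsf{S}])=N$ to $\mathbf{V}(\mathsf{S}\mid N)=N$ requires $[G,\mathsf{S}]=N$, and nothing in the hypotheses forces that. The corollary becomes true if one either adds the hypothesis $[G,\mathsf{S}]=N$ or replaces $\mathbf{V}(\mathsf{S})=N$ by $\mathbf{V}(\mathsf{S}\mid N)=N$ throughout; in the latter form it is an immediate corollary of Theorem~\ref{introstarprod}, which is presumably what the authors intended. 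Your treatment of the remaining implications (including the first sentence in both directions and the backward half of the second) is correct and cleaner than the paper's sketch.
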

Note that this also shows (perhaps not surprisingly) that there is no obvious relationship between $\mathbf{V}(\mathsf{C})$ and $\mathbf{V}(\mathsf{D})$, even if $\mathsf{C}\preccurlyeq\mathsf{D}$. In particular, even though the condition $\mathbf{V}(G)<G$ is rather strong (in fact this condition guarantees that $G$ is essentially a $p$-group \cite[Lemma 2.6]{MLvos09}), the only thing that can be deduced from $G$ having a supercharacter $\mathsf{S}$ such that $\mathbf{V}(\mathsf{S})<G$ is that $G$ is not perfect (i.e. $[G,G]<G$).

\begin{exam}
We return to Example~\ref{deltaexample}, to further decompose $\mathsf{S}$. Let $\psi_i\in\mathrm{Irr}(\mathsf{S}_N)$ such that $\inner{\psi_i,(\chi_i)_N}>0$. Then by Lemma~\ref{indexsize}, we have $\chi_i(1)/\psi_i(1)\in\{1,2\}$. Thus, by order considerations, we must have $\psi_2(1)=2$ and $\psi_3(1)=\psi_4(1)=6$. So the $\mathsf{S}_N$-character table is 
\begin{center}
\begin{tabular}{|c|c|c|c|c|c|}
\hline
$\mathsf{S}_N$&1&$K_2$&$K_3$&$K_4$\\
\hline
$\mathbbm{1}$&1&1&1&1\\
\hline
$\psi_2$&2&$-1$&2&2\\
\hline
$\psi_3$&6&0&$3c_1$&$3c_2$\\
\hline
$\psi_4$&6&0&$3c_2$&$3c_1$\\
\hline
\end{tabular}\\[5ex]
\end{center}

Similarly, if $\eta_i\in\mathrm{Irr}(\mathsf{S}_H)$ is such that $\inner{\eta_i,(\chi_i)_N}>0$, the $\mathsf{S}_H$-character table is 
\begin{center}
\begin{tabular}{|c|c|c|c|c|c|}
\hline
$\mathsf{S}_H$&1&$K_3$&$K_4$\\
\hline
$\mathbbm{1}$&1&1&1\\
\hline
$\eta_3$&2&$c_1$&$c_2$\\
\hline
$\eta_4$&2&$c_2$&$c_1$\\
\hline
\end{tabular}\\[5ex]
\end{center}

Since $H=\{1\}\cup K_3\cup K_4$, and since both $\psi_3$ and $\psi_4$ vanish on $K_2=N\setminus H$, it follows that $\mathsf{S}_N$ is a nontrivial $\ast$-product over $H$. Since $\norm{\mathsf{S}_N}=4=\norm{\mathsf{S}_H}+1$, we conclude that
\[\mathsf{S}_N=\mathsf{S}_H\ast\mathsf{M}(N/H).\]
Moreover, it follows that 
\[\mathsf{S}=\left(\mathsf{S}_H\ast\mathsf{M}(N/H)\right)\mathop{\Delta}\mathsf{S}_{G/H}.\]
\end{exam}

We conclude with the following consequences of Lemma~\ref{vanishingcontainment} and Theorem~\ref{introstarprod}.  The first shows that we may use the $\mathsf{S}$-character table to deduce whether or not $\mathsf{S}$ is a nontrivial $\ast$-product or a nontrivial $\Delta$-product in a systemic way.
\begin{thm}\label{pairs}
Let $\mathsf{S}$ be a supercharacter theory of $G$, and let $N>1$ be $\mathsf{S}$-normal. There exists an $\mathsf{S}$-normal subgroup $H$ containing $N$ for which $\mathsf{S}=\mathsf{S}_H\mathop{\Delta}\mathsf{S}_{G/N}$ if and only if ${\bf V}(\mathsf{S}\mid N)<G$.
\end{thm}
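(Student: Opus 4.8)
The plan is to show both directions using the characterization of $\Delta$-products in terms of vanishing-off subgroups from Theorem~\ref{delta}, namely that $\mathsf{S}$ is a $\Delta$-product over $M$ and $N$ if and only if ${\bf V}(\mathsf{S}\mid M)\le N$. The subtlety here is that the statement fixes $N$ as the \emph{upper} subgroup of the pair, and asks when there is a \emph{lower} $\mathsf{S}$-normal subgroup $H$ (the notation in the theorem uses $H$ for what was $M$ in Theorem~\ref{delta}, and the roles of the two subgroups are swapped relative to the earlier convention, so I will take $\mathsf{S} = \mathsf{S}_H \Delta \mathsf{S}_{G/N}$ to mean $\mathsf{S}$ is a $\Delta$-product over $N$ and $H$ with $N \le H$); in other words, we want the largest admissible candidate for $H$ and must check it lies strictly below $G$.

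For the forward direction, suppose $H > N$ is $\mathsf{S}$-normal with $\mathsf{S} = \mathsf{S}_H \Delta \mathsf{S}_{G/N}$, i.e.\ $\mathsf{S}$ is a $\Delta$-product over $N$ and $H$. By the equivalence {\bf(1)}$\Leftrightarrow${\bf(4)} of Theorem~\ref{delta}, this gives ${\bf V}(\mathsf{S}\mid N)\le H < G$, which is exactly what we want. For the converse, suppose ${\bf V}(\mathsf{S}\mid N) < G$. Set $H = {\bf V}(\mathsf{S}\mid N)$. By Proposition~\ref{vanishingprod}, $H$ is $\mathsf{S}$-normal, and by Lemma~\ref{vanishingcontainment}, $N \le H$; moreover $H < G$ by hypothesis. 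Now apply Theorem~\ref{delta} with the pair $(N, H)$: condition {\bf(4)} reads ${\bf V}(\mathsf{S}\mid N)\le H$, which holds trivially since $H = {\bf V}(\mathsf{S}\mid N)$. Therefore $\mathsf{S}$ is a $\Delta$-product over $N$ and $H$, i.e.\ $\mathsf{S} = \mathsf{S}_H \Delta \mathsf{S}_{G/N}$.

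The one point requiring care — and the only real "obstacle," though a mild one — is bookkeeping the direction of the containment and making sure the chosen $H$ genuinely strictly contains nothing forbidden: we need $N \le H < G$ with $H$ an $\mathsf{S}$-normal subgroup, and all three of these are supplied respectively by Lemma~\ref{vanishingcontainment}, the hypothesis ${\bf V}(\mathsf{S}\mid N) < G$, and Proposition~\ref{vanishingprod}. It is also worth noting explicitly that taking $H = {\bf V}(\mathsf{S}\mid N)$ gives the \emph{smallest} possible upper subgroup, by Lemma~\ref{mono} and the minimality inherent in Theorem~\ref{delta}{\bf(4)}; any $\mathsf{S}$-normal subgroup between ${\bf V}(\mathsf{S}\mid N)$ and $G$ works equally well by Corollary~\ref{pushapart}. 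So no further estimates are needed, and the proof reduces to assembling the cited results in the correct order.
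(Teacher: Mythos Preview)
Your proof is correct and follows essentially the same approach as the paper's: both directions go through Theorem~\ref{delta}, and for the converse you take $H=\mathbf{V}(\mathsf{S}\mid N)$, checking it is $\mathsf{S}$-normal (Proposition~\ref{vanishingprod}) and contains $N$ (Lemma~\ref{vanishingcontainment}). The paper's proof differs only cosmetically: it splits the converse into the case $N=\mathbf{V}(\mathsf{S}\mid N)$, quoting Theorem~\ref{introstarprod} to get a $\ast$-product, and the case $N<\mathbf{V}(\mathsf{S}\mid N)$, quoting Theorem~\ref{delta}; but since a $\ast$-product is just a $\Delta$-product with equal subgroups, your single invocation of Theorem~\ref{delta} already covers both cases, so your version is if anything slightly more streamlined. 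One small slip: you write ``suppose $H>N$'' in the forward direction, but the hypothesis only gives $H\ge N$; this does not affect the argument. Both your write-up and the paper's silently assume $H<G$ in the forward direction, which is implicit in the $\Delta$-product being nontrivial.
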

\begin{proof}
One direction is clear from Theorem~\ref{delta}. Now suppose that ${\bf V}(\mathsf{S}\mid N)<G$. If $N={\bf V}(\mathsf{S}\mid N)$, then $\mathsf{S}$ is an $\ast$-product over $N$ (a $\Delta$-product over $N$ and $N$) by Theorem~\ref{introstarprod}. Otherwise, we must have $N<{\bf V}(\mathsf{S}\mid N)$ by Lemma~\ref{vanishingcontainment}; hence $\mathsf{S}$ is a $\Delta$-product over $N$ and ${\bf V}(\mathsf{S}\mid N)$ by Theorem~\ref{delta}.
\end{proof}

We now obtain a similar result specifically for $\ast$-products. If $N\lhd_{\mathsf{S}}G$ and $N\nsubseteq[G,\mathsf{S}]$, it is easy to see that $\mathbf{V}(\mathsf{S}\mid N)=G$. So let us consider an $\mathsf{S}$-normal subgroup $N\le[G,\mathsf{S}]$. If $\mathbf{V}(\mathsf{S}\mid N)= N$, then $\mathsf{S}$ is a $\ast$-product over $N$ by Theorem~\ref{introstarprod}; otherwise $\mathbf{V}(\mathsf{S}\mid N)>N$  by Lemma~\ref{vanishingcontainment}. Thus we may consider the subgroup $\mathbf{V}^2(\mathsf{S}\mid N)=\mathbf{V}(\mathsf{S}\mid\mathbf{V}(\mathsf{S}\mid N))$. We may then define the ascending chain of subgroups $\mathbf{V}^i(\mathsf{S}\mid N)$, defined by $\mathbf{V}^i(\mathsf{S}\mid N)=\mathbf{V}(\mathsf{S}\mid\mathbf{V}^{i-1}(\mathsf{S}\mid N))$ for $i\ge 2$.

\begin{lem}
Let $\mathsf{S}$ be a supercharacter theory of $G$, and let $N>1$ be $\mathsf{S}$-normal. Let $V$ be the terminal member of $\mathbf{V}^i(\mathsf{S}\mid N)$. Then $V<G$ if and only if there exists a proper $\mathsf{S}$-normal subgroup $H$ of $G$ containing $N$ for which $\mathsf{S}$ is a $\ast$-product over $H$.
\end{lem}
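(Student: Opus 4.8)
The plan is to exploit the ascending chain $N \le \mathbf{V}(\mathsf{S}\mid N) \le \mathbf{V}^2(\mathsf{S}\mid N) \le \cdots$, which is genuinely ascending (strictly increasing until it stabilizes) by Lemma~\ref{mono}: each term is $\mathsf{S}$-normal by Proposition~\ref{vanishingprod}, and since $\mathbf{V}^{i-1}(\mathsf{S}\mid N) \le \mathbf{V}^i(\mathsf{S}\mid N)$ we get $\mathbf{V}(\mathsf{S}\mid \mathbf{V}^{i-1}) \le \mathbf{V}(\mathsf{S}\mid \mathbf{V}^i)$, i.e. $\mathbf{V}^i \le \mathbf{V}^{i+1}$. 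Since $G$ is finite, the chain stabilizes at some terminal member $V$, which is characterized by the fixed-point equation $\mathbf{V}(\mathsf{S}\mid V) = V$.

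First I would prove the forward direction. Suppose $V < G$. Because $\mathbf{V}(\mathsf{S}\mid V) = V$ and $V > 1$ (as $N > 1$ and $N \le V$), Theorem~\ref{introstarprod} applies directly with $N$ replaced by $V$: the condition $\mathbf{V}(\mathsf{S}\mid V) = V$ is exactly condition {\bf(3)} of that theorem, so $\mathsf{S}$ is a $\ast$-product over $V$. Taking $H = V$ gives a proper $\mathsf{S}$-normal subgroup of $G$ containing $N$ over which $\mathsf{S}$ is a $\ast$-product, as required.

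For the converse, suppose $H$ is a proper $\mathsf{S}$-normal subgroup with $N \le H < G$ and $\mathsf{S}$ is a $\ast$-product over $H$. By Theorem~\ref{introstarprod}, $\mathbf{V}(\mathsf{S}\mid H) = H$. The key step is then to show by induction on $i$ that $\mathbf{V}^i(\mathsf{S}\mid N) \le H$ for every $i$: for $i = 1$ this is $\mathbf{V}(\mathsf{S}\mid N) \le \mathbf{V}(\mathsf{S}\mid H) = H$ by Lemma~\ref{mono} (using $N \le H$); and if $\mathbf{V}^{i-1}(\mathsf{S}\mid N) \le H$ then $\mathbf{V}^i(\mathsf{S}\mid N) = \mathbf{V}(\mathsf{S}\mid \mathbf{V}^{i-1}(\mathsf{S}\mid N)) \le \mathbf{V}(\mathsf{S}\mid H) = H$, again by Lemma~\ref{mono}. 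In particular the terminal member satisfies $V \le H < G$, so $V < G$. This completes the proof. The only mild subtlety is verifying that the hypotheses of Theorem~\ref{introstarprod} are met in the forward direction — specifically that $V$ is a legitimate $\mathsf{S}$-normal subgroup (Proposition~\ref{vanishingprod}) and that the terminal member is indeed a fixed point of the map $K \mapsto \mathbf{V}(\mathsf{S}\mid K)$ — but once the chain is set up correctly this is immediate, and there is no real obstacle.
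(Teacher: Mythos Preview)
Your proof is correct and follows essentially the same approach as the paper: in the forward direction you take $H=V$ and invoke Theorem~\ref{introstarprod} via the fixed-point equation $\mathbf{V}(\mathsf{S}\mid V)=V$, and in the converse you use Lemma~\ref{mono} (together with $\mathbf{V}(\mathsf{S}\mid H)=H$) to bound every $\mathbf{V}^i(\mathsf{S}\mid N)$ by $H$. The only minor remark is that the base step $N\le\mathbf{V}(\mathsf{S}\mid N)$ of your ascending chain actually comes from Lemma~\ref{vanishingcontainment} rather than Lemma~\ref{mono}, but this does not affect the argument.
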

\begin{proof}
First suppose that $V<G$. Then there is some index $n$ for which we have $\mathbf{V}^n(\mathsf{S}\mid N)=\mathbf{V}^{n+1}(\mathsf{S}\mid N)=\mathbf{V}(\mathsf{S}\mid\mathbf{V}^n(\mathsf{S}\mid N))$. Then $\mathsf{S}$ is a $\ast$-product over $V$ and clearly $N\le V$.

Now suppose that there exists a proper $\mathsf{S}$-normal subgroup $H$ of $G$ containing $N$ for which $\mathsf{S}$ is a $\ast$-product over $H$. Then, since $\mathbf{V}(\mathsf{S}\mid H)=H$, we have $\mathbf{V}^i(\mathsf{S}\mid N)\le H$ for all $i$ by Lemma~\ref{mono}. In particular, $V\le H<G$. 
\end{proof}
\bibliographystyle{plain}
\bibliography{bio}
\end{document}